\def\@currentlabel{2.1}\label{e:dispaa}
\def\@currentlabel{2.21}\label{e:dispau}
\def\@currentlabel{2.22}\label{e:dispav}
\def\@currentlabel{2.23}\label{e:dispaw}
\def\@currentlabel{2.24}\label{e:dispax}
\DeclareMathOperator{\sech}{sech}
\newtheorem{thm}{Theorem}[section]
\newtheorem{lem}[thm]{Lemma}
\newtheorem{prop}[thm]{Proposition}
\newtheorem{rem}[thm]{Remark}
\numberwithin{equation}{section}
\newcommand{\R}{\mathbb{R}}
\newcommand{\ve}{\varepsilon}
\begin{document}

\title[]{The existence and stability of spike solutions for a chemotaxis system modeling  crime pattern formation }

\author{Henri Berestycki}

\address{\'{E}cole des Hautes \'{E}tudes en Sciences Sociales, PSL University Paris, Centre d'analyse et de math\'{e}matique sociales (CAMS), CNRS, 54 bouvelard Raspail, 75006, Paris, France}

\email{hb@ehess.fr}
\author[L. Mei]{Linfeng Mei}
\address{Department of Mathematics and Computer Sciences, Zhejiang Normal University,  Jinhuan 321004,  P.R. China}
\email{mlf@htu.edu.cn}

\author[J. Wei]{Juncheng Wei}
\address{Department of Mathematics,
The University of British Columbia,
Vancouver, BC
Canada V6T 1Z2}
\email{jcwei@math.ubc.ca}

\thanks{The research of H. Berestycki has been supported by the French National Research Agency (ANR), within  project NONLOCAL ANR-14-CE25-0013. L. Mei is supported by the National Natural Science Foundation of China: grant no.11771125. J. Wei is partially supported by NSERC of Canada.}

\begin{abstract}
This paper is a continuation of the paper Berestycki, Wei and Winter \cite{Berestycki2014}. In \cite{Berestycki2014},  the existence of multiple symmetric and asymmetric spike solutions of a chemotaxis system modeling crime pattern formation, suggested by Short, Bertozzi, and Brantingham \cite{Short2010}, has been proved in the one-dimensional case. The problem of stability of these spike solutions has been left open.  In this paper, we establish the existence of a single radial symmetric spike solution for the system in the one and two-dimensional cases. The main difficulty is to deal with quasilinear elliptic problems whose diffusion coefficients vary largely near the core. We also study the linear stability of the spike solutions in both one-dimensional and two-dimensional cases which show complete different behaviors. In the one-dimensional case, we show that when the reaction time ratio $\tau>0$ is small enough, or large enough, the spike solution is linearly stable. In the two-dimensional case, when $\tau$ is small enough, the spike solution is linearly stable; while when $\tau$ is large enough, the spike solution is linearly unstable and Hopf bifurcation occurs from the spike solution at some $\tau=\tau_h$.
\end{abstract}

\date{\today}
\keywords{crime model, reaction-diffusion system, quasilinear chemotaxis system, spike solutions, linear stability}
\subjclass[2010]{Primary 35J25, 35B35; Secondary 91D25, 35B36}

\maketitle

\section{Introduction}

Pattern forming reaction-diffusion systems have been and are applied to many phenomena in the natural sciences. Recent works have also started to use such systems to describe macroscopic social phenomena. In this direction, Short, Bertozzi and Brantingham \cite{Short2010} have proposed a system of nonlinear parabolic partial differential equations to describe  the formation of hotspots of criminal activity. Their equations are derived from an agent-based lattice model which incorporates the movement of criminals and a given scalar field representing the attractiveness of crime in a given location.
Let $\Omega$ be a bounded smooth domain in $\R^N (N=1, 2)$. Then the system in $\Omega$ reads
\begin{equation}\label{eqn_t_0}
\left\{\begin{aligned}
&A_t=\ve^2 \Delta A-A+PA+\alpha_0(x), \;\;&&x\in\Omega,\;t>0,\\
&\tau P_t=D(\ve)\nabla (\nabla P-2\frac{P}{A}\nabla A)-PA+\gamma_0(x), \;\;&& x\in\Omega,\; t>0,\\
&\partial_{n} A=\partial_{n} P=0,\;\;&&x\in\partial\Omega, \;t>0.
\end{aligned}\right.
\end{equation}
Here $A=A(x, t)$ is the criminal activity  at the place $x$ and the time $t$, and $P=P(x, t)$ denotes the density of criminals at $(x, t)$. The field $A(x,t)$ represents a variable incorporating the perceived criminal opportunities.  The rate at which crimes occur is given by $PA$. When this rate increases, the number of criminals is reduced while the attractiveness increases. The latter feature corresponds to repeated offences. The positive function $\alpha_0(x)$ is the intrinsic attractiveness which is static in time but possibly variable in space. The positive function $\gamma_0(x)$ is the introduction rate of the offenders. For the precise meanings of the functions $\alpha_0(x)$ and $\gamma_0(x)$, we refer to \cite{Short2008, Short2010, Short2010a} and the references therein. The small parameter $\ve>0$ is assumed to be independent of $x$ and $t$. The parameter $\ve^2$ represents nearest neighbor interactions in the lattice model for the attractiveness. We assume that it is very small which corresponds to the temporal dependence of attractiveness dominating its spatial dependence. This is related to  the slow propagation of the attractiveness, as compared to the propagation rate of the criminals
 $D(\ve)>0$. Here $D(\ve)$ is a large positive constant that does not depend on $x$ and $t$ and tends to $+\infty$ at a suitable speed, as $\ve\to 0$.

\medskip

 The parameter $
\tau>0$ describes the ratio of the reaction times of the two equations. We assume that $\tau>0$ depends on $\ve$: $\tau=\tau(\ve)$. As it turns out the natural scaling for  $\tau(\ve)$ is $\tau(\ve)\sim \ve^{-N}$.  (In other cases one gets $0$ or $+\infty$ as limits.) Therefore in this paper we assume
\begin{equation}\label{tau}
\tau(\ve)\ve^{N} =O(1).
\end{equation}

Clearly if
\begin{equation}\label{alpha-gamma}
\alpha_0(x)\equiv \alpha_0,\qquad \gamma_0(x)\equiv \gamma_0,
\end{equation}
then
\begin{equation}\label{constSol}
(A, P)=\left(\alpha_{0}+\gamma_0, \frac{\gamma_0}{\alpha_{0}+\gamma_0}\right)
\end{equation}
is the only constant steady state solution, which does not depend on $\ve$.

We are interested in the steady state solutions of \eqref{eqn_t_0} with hotspot (spike) pattern, and its linear stability, when $D(\ve)\to \infty$ at a suitable speed as $\ve\to 0$.

\medskip

Before going into this, let us
mention some  related mathematical works. Short, Bertozzi and Brantingham \cite{Short2010} proposed the model \eqref{eqn_t_0} on mean field considerations. They also performed a weakly nonlinear analysis around the constant solution, assuming that \eqref{alpha-gamma} holds. Cantrell, Cosner and Manasevich \cite{Cantrell2012} considered rigorously the global bifurcation of steady states emanating from the unique constant steady state \eqref{constSol}.  Rodriguez and Winkler \cite{RW} and  Winkler \cite{Winkler} established the existence of globally defined solutions to the system \eqref{eqn_t_0} in a one dimensional interval or  two-dimensional ball respectively, assuming radially symmetric initial conditions. Kolokolnikov, Ward and Wei \cite{Kolokolnikov2014}, and Ward and Tse \cite{Tse2016} studied the existence and stability of multiple symmetric spikes for the steady states of \eqref{eqn_t_0} via matched asymptotics.  See also  Lloyd and O'Farrell \cite{Lloyd2013, Lloyd2016} by  geometric singular perturbations. Furthermore, Berestycki, Wei and Winter \cite{Berestycki2014} gave a  rigorous proof of the existence of symmetric and asymmetric multi-spike steady state solutions in a one dimensional interval by reducing the problem to a  Schnakenberg type system. But they left open the stability of the spike solutions.

\medskip

We would like to mention that  Zipkin, Short and Bertozzi \cite{Zipkin2014} and  Ward and Tse \cite{Tse2018} studied crime models along the same line but with police intervention. Chaturapruek et al. \cite{Chaturapruek2013} analyzed  a crime model with Levy flights. Berestycki and Nadal \cite{Berestycki2010} proposed and analyzed another model of criminality  with hotspot phenomena. Finally, Berestycki, Rodriguez, and Ryzhik \cite{Berestycki2013} proved the existence of traveling wave solutions in a crime model.

\medskip

Before stating our main results,
let us  make the  change of variable
\begin{equation}
V=P/A^2,
\end{equation}
and transform \eqref{eqn_t_0} into an equivalent form:
\begin{equation}\label{eqn_t_1}
\left\{\begin{aligned}
&A_t=\ve^2 \Delta A-A+VA^3+\alpha_{0}(x), \;\;&&x\in\Omega,\; t>0,\\
&\tau (A^2V)_t=D(\ve)\nabla (A^2\nabla V)-VA^3+\gamma_0(x), \;\; &&x\in\Omega,\; t>0,\\
&\partial_{n} A=\partial_{n} V=0,\;\;&&x\in\partial\Omega, \; t>0.
\end{aligned}\right.
\end{equation}
By the  rescaling
\begin{equation}
 A(x, t)=\ve^{-N}u(x, t), \hskip 1cm V(x, t)=\ve^N v(x, t),
\end{equation}
\eqref{eqn_t_1} becomes
\begin{equation}\label{eqn0}
\left\{\begin{aligned}
&u_t=\ve^2\Delta u-u+vu^3+\alpha_{0}(x)\ve^{N}, \;\;&&x\in\Omega,\\
&\tau (u^2v)_t=D(\ve)\nabla (u^2\nabla v)-\ve^{-N}vu^3+\gamma_0(x), \;\; &&x\in\Omega,\\
&\partial_{n} u=\partial_{n} v=0,\quad &&x\in \partial\Omega.
\end{aligned}\right.
\end{equation}

We would like to construct  spiky positive steady states of \eqref{eqn0} concentrating at some chosen finite spots in $\Omega$.  The core profile of the spiky solution is governed by the radially symmetric solution to  the problem
\begin{equation}\label{eq-w}
\Delta w-w+w^3=0\qquad\text{in}\;\;\R^N.
\end{equation}
It is well-known  \cite{Gidas1981, Kwong1991} that for $N\leq 3$ \eqref{eq-w} has a unique solution satisfying
\begin{equation}\label{w}
w>0\quad\text{in}\;\;\R^N,\qquad w(0)=\max_{y\in\R^N}w(y),\qquad \lim_{|y|\to +\infty}w(y)=0.
\end{equation}

For  solutions concentrating at the single spot $x_0\in \Omega$, we  expect them to have the profile
\begin{equation}\label{profile1}
u_{\ve}(x)\sim \alpha_{0}(x)\ve^N+[v_{\ve}(x_\ve)]^{-1/2}w\left(\frac{x-x_\ve}{\ve}\right),\qquad v_{\ve}(x)\sim v_{\ve}(x_\ve),
\end{equation}
where $x_{\ve}\to x_0$ as $\ve\to 0$, and $w$ is the unique positive solution of \eqref{eq-w} satisfying \eqref{w}.

\medskip

Assuming \eqref{profile1} and integrating the  steady state equations of \eqref{eqn0} over $\Omega$ we obtain
\begin{equation}\label{profile-v}
v_{\ve}(x)\sim \left(\frac{\int_{\R^N}w^3(y)dy}{\int_{\Omega}\gamma_0(x)dx}\right)^2.
\end{equation}
Actually, we will construct a solution with the profile
\begin{equation}\label{u1-}
u_{\ve}(x)=\alpha_{0}\ve^N+[v_{\ve}(x_\ve)]^{-1/2}w\left(\frac{x-x_\ve}{\ve}\right)+\phi(x)
\end{equation}
with $\phi(x)$ satisfying
\begin{equation}\label{u2-}
|\phi(x)|\leq C\ve^{1+N}\max(e^{-\frac{|x-x_\ve|}{2\ve}}, \sqrt{\ve}),
\end{equation}
where $C>0$ is some constant, independent of $v$ and $\epsilon>0$,  to be properly chosen.  In this case $v_{\ve}$ has  the profile \eqref{profile-v}.

\medskip

If $\Omega$ is a generic bounded domain in $\R^N$, the construction of spiky positive solutions seems rather difficult, due to the quasilinear nature of the problem. As a model problem we consider the case when $\Omega$ is a ball in $\R^N$ and construct  radial spike solutions concentrating at the center of the ball. In doing so, we always assume that $\alpha_0(x)$ and $\gamma_0(x)$ are positive constant functions, namely \eqref{alpha-gamma} that  holds.

\medskip

The main findings  of this paper can be summarized in the following three theorems.
\begin{thm}\label{thm1}
Let $N=1, 2$. Assume  $\Omega=B_R\subset \R^N$ and \eqref{alpha-gamma} holds. Assume that $ D(\epsilon)$ satisfies
\begin{equation}\label{D-ve}
D(\ve)=\frac{D_0(\ve)}{\ve^{2N}}\qquad \text{with}\;\;\;\;D_0(\ve)\to \infty\quad\text{as}\;\;\ve\to 0.
\end{equation}
Then, as $\ve\to 0$, problem \eqref{eqn_t_1} has a radial symmetric steady state $(A_{\ve}, V_{\ve})$ satisfying the following properties
\begin{equation}
A_{\ve}(x)=\alpha_0+\frac{1}{\ve^N}\frac{1}{\sqrt{v_0}}w\left(\frac{x}{\ve}\right)+O(\ve),
\end{equation}
\begin{equation}
V_{\ve}(x)=v_0\ve^N+O\left(\frac{\ve^N}{D_0(\ve)}\right),
\end{equation}
where
\begin{equation}
v_0:=\left(\frac{\int_{\R^N}w^3(y)dy}{\gamma_0|B_R|}\right)^2.
\end{equation}
\end{thm}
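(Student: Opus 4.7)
The plan is to apply a Lyapunov--Schmidt reduction in the class of radial functions on $B_R$, working with the rescaled system \eqref{eqn0} and decoupling by first solving the $v$-equation as a function of $u$. I would start by constructing an approximate solution
\[
U_\ve(x) = \alpha_0\ve^N + v_0^{-1/2}\,w(x/\ve), \qquad v \equiv v_0,
\]
with the spike necessarily centred at $0$ by radial symmetry and $v_0 = \bigl(\int_{\R^N} w^3/(\gamma_0|B_R|)\bigr)^2$ dictated by the leading-order solvability condition for the second equation of \eqref{eqn0}. Integrating that equation over $B_R$ against $1$ and using the Neumann boundary condition gives
\[
\int_{B_R}\bigl(\ve^{-N} v u^3 - \gamma_0\bigr)\,dx = 0,
\]
into which substituting $u\sim v_0^{-1/2}w(x/\ve)$ and rescaling $y=x/\ve$ produces the above formula for $v_0$.

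Since the second equation of \eqref{eqn0} is linear in $v$ once $u$ is frozen, I would solve it for every radial $u$ in an $\ve^{1+N}$-neighbourhood of $U_\ve$ to obtain a map $v = v[u]$. Written as
\[
\nabla\!\cdot\!\bigl(u^2\nabla v\bigr) = \frac{\ve^{2N}}{D_0(\ve)}\bigl(\ve^{-N} v u^3-\gamma_0\bigr),\qquad \partial_n v\big|_{\partial B_R}=0,
\]
it is, for each finite $D_0(\ve)$, a coercive linear elliptic problem (coercivity coming from the $vu^3$ term), hence uniquely solvable. As $D_0(\ve)\to\infty$ the right-hand side vanishes and the limit equation $\nabla\!\cdot(u^2\nabla v)=0$ only admits constants; the implicit compatibility condition for the full problem is precisely the display above and selects $v_0$ as the limit constant. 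Standard elliptic estimates then yield $v[u]-v_0 = O(1/D_0(\ve))$ uniformly, and passing back via $V=\ve^N v$ gives the bound on $V_\ve$ asserted in the theorem. Fréchet differentiability of $u\mapsto v[u]$ in an appropriate weighted norm follows from the same inversion.

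The problem then reduces to a single nonlinear scalar equation,
\[
\mathcal{S}_\ve[u] := \ve^2\Delta u - u + v[u]\,u^3 + \alpha_0\ve^N = 0, \qquad \partial_n u\big|_{\partial B_R}=0,
\]
for a radial $u$. Linearising $\mathcal{S}_\ve$ at $U_\ve$ and blowing up $y=x/\ve$, the dominant part of the linearisation is $L_0\phi = \Delta\phi - \phi + 3w^2\phi$ on $\R^N$, whose radial kernel is trivial by the non-degeneracy of $w$ \cite{Kwong1991} (the translation modes $\partial_i w$ are ruled out by radiality). The perturbations coming from $v[u]-v_0$, from the finite size of the ball, and from the inhomogeneity $\alpha_0\ve^N$ (which also requires a preliminary higher-order correction of the ansatz to cancel a term of size $\alpha_0\ve^N w^2$) are small in the weighted $L^\infty$ norm built around the majorant $\max\bigl(e^{-|y|/2},\sqrt{\ve}\bigr)$ from \eqref{u2-}. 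A contraction mapping in this norm then produces $\phi$ satisfying \eqref{u2-}, and undoing the rescalings $A=\ve^{-N}u$, $V=\ve^N v$ recovers $(A_\ve,V_\ve)$ with the claimed profiles.

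The main obstacle will be the quasilinearity of the $v$-equation. The coefficient $u^2$ spans the range from $O(\alpha_0^2\ve^{2N})$ in the outer region down to $O(1)$ at the spike core, so $\nabla\!\cdot(u^2\nabla\cdot)$ degenerates badly as $\ve\to 0$ and its inverse must be controlled uniformly across both regimes. Bounding the Fréchet derivative of $u\mapsto v[u]$ in the same weighted norm that drives the contraction for the reduced $u$-equation---so that the nonlocal coupling through $v[u]\,u^3$ can be absorbed---is the technical heart of the proof; once that is in place the remainder proceeds along standard Lyapunov--Schmidt lines.
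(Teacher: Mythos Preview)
Your overall architecture matches the paper's: solve the $v$-equation as a functional $v=v[u]$, substitute into the $u$-equation, and close by a contraction in a weighted norm. There is, however, a genuine gap in the linearisation step. You assert that after blow-up the dominant part of $\mathcal{S}_\ve'[U_\ve]$ is $L_0$, with the coupling through $u^3\,v'[u]\phi$ to be ``absorbed'' as a small perturbation. This fails: although $v[u]-v_0=O(1/D_0(\ve))$, the Fr\'echet derivative $v'[u]\phi$ is \emph{not} small. Differentiating the compatibility condition $\int_{B_R}\bigl(\ve^{-N} v[u]u^3-\gamma_0\bigr)\,dx=0$ with respect to $u$ and using that $v'[u]\phi$ is itself almost constant (by the same mechanism that flattens $v$) yields, to leading order, $(v'[u]\phi)(0)\approx -3\,v_0^{3/2}\int_{\R^N} w^2\phi\big/\int_{\R^N} w^3$, and hence
\[
u^3\,v'[u]\phi \;\approx\; -3\,w^3\,\frac{\int_{\R^N} w^2\phi}{\int_{\R^N} w^3},
\]
an $O(1)$ nonlocal contribution. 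The limiting linearised operator is therefore not $L_0$ but
\[
\mathcal{L}\phi \;=\; L_0\phi \;-\; 3\,\frac{\int_{\R^N} w^2\phi}{\int_{\R^N} w^3}\,w^3,
\]
and the contraction hinges on the invertibility of $\mathcal{L}$ on $H^2_r(\R^N)$. That invertibility is a separate, nontrivial fact \cite{Wei1999,Wei2001}, which the paper invokes explicitly (end of Section~3 and proof of Proposition~4.1); the radial nondegeneracy of $w$ alone does not suffice, since $L_0$ has a positive eigenvalue with radial eigenfunction.

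A secondary point: the flatness estimate $v[u]-v_0=O(1/D_0(\ve))$ does not follow from ``standard elliptic estimates'', precisely because of the degeneration of $u^2$ you flag at the end. The paper obtains it (Lemma~2.1) by writing $\nabla_r v$ explicitly as an integral via the radial ODE and then bounding the ratio $\bigl(\int_0^r s^{N-1}u^3\,ds\bigr)\big/\bigl(r^{N-1}u^2(r)\bigr)$ across a partition of $[0,R]$ into subintervals on which $u$ has a definite power-of-$\ve$ size. That hands-on radial computation is what tames the quasilinearity; an abstract elliptic inversion in a fixed weighted space would not detect the needed balance between numerator and denominator.
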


\begin{thm}\label{stability-2d}
Let $N=2$. Assume the conditions in Theorem \ref{thm1} hold. Then, for all small $\ve>0$, there exists $0<\tilde{\tau}_{1, \ve}<\tilde{\tau}_{2, \ve}<\infty$ such that for all $0<\tau\leq\tilde{\tau}_{1, \ve}\ve^{-N}$, the spike solution of Theorem \ref{thm1} is stable, while for $\tau\geq \tilde{\tau}_{2, \ve}\ve^{-N}$, the spike solution of Theorem \ref{thm1} is unstable, and Hopf bifurcation occurs at some $\tau_{h,\ve}\in (\tilde{\tau}_{1, \ve}\ve^{-N}, \tilde{\tau}_{2, \ve}\ve^{-N})$.
\end{thm}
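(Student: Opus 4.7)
The plan is to linearize \eqref{eqn_t_1} around the spike $(A_\ve,V_\ve)$ produced by Theorem \ref{thm1} and reduce the resulting eigenvalue problem to a tractable nonlocal eigenvalue problem (NLEP), then track eigenvalues as $\tau$ varies. Writing a perturbation as $e^{\lambda t}(\phi_\ve(x),\psi_\ve(x))$, passing to the $(u,v)$ variables of \eqref{eqn0}, and rescaling near the spike via $y=(x-x_\ve)/\ve$, the leading-order inner profile $\Phi(y)$ of $\phi_\ve$ couples to the macroscopic perturbation $\psi_\ve$. Because $D(\ve)=D_0(\ve)\ve^{-2N}$ with $D_0(\ve)\to\infty$, the equation for $\psi_\ve$ is that of a highly diffusive field on $B_R$ with source concentrated at $x_\ve$, so $\psi_\ve$ is essentially spatially constant to leading order. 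Solvability of the averaged equation for $\psi_\ve$, together with the scaling $\tau=\tilde\tau\ve^{-N}$ that makes $\tau\ve^N=O(1)$, expresses $\psi_\ve(x_\ve)$ in terms of $\int_{\R^2}w^2\Phi\,dy$ and $\tilde\tau\lambda$. Substituting back yields an NLEP of the form
\begin{equation*}
\Delta\Phi-\Phi+3w^2\Phi-\chi(\tilde\tau\lambda,\ve)\,w^3\,\frac{\int_{\R^2}w^2\Phi\,dy}{\int_{\R^2}w^3\,dy}=\lambda\Phi\qquad\text{in }\R^2,
\end{equation*}
with $\chi$ meromorphic in $\tilde\tau\lambda$.

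I would then analyse this NLEP in the two asymptotic regimes. For $\tau\ve^N\to 0$ the coefficient $\chi$ converges to a constant $\chi_0$ lying above the stability threshold of the cubic NLEP on $\R^2$ (as exploited in \cite{Berestycki2014} and the references therein), so every nontrivial eigenvalue satisfies $\mathrm{Re}\,\lambda<0$; this produces $\tilde\tau_{1,\ve}$. For $\tau\ve^N\to\infty$ the nonlocal term drops out and the NLEP degenerates to the local operator $\Delta\Phi-\Phi+3w^2\Phi=\lambda\Phi$, whose principal eigenvalue $\lambda_1>0$ is attained at a radial eigenfunction of one sign (the Pohozaev-type identity $\int w(\Delta-1+3w^2)w\,dy = 2\int w^4\,dy$ shows $L_0$ has a negative direction). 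A Rouché-type perturbation argument then shows the full NLEP inherits a nearby real positive eigenvalue for all $\tau\ge\tilde\tau_{2,\ve}\ve^{-N}$, giving instability.

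For the Hopf bifurcation, I would follow eigenvalues $\lambda(\tau)$ of the NLEP continuously as $\tau$ increases from $\tilde\tau_{1,\ve}\ve^{-N}$ to $\tilde\tau_{2,\ve}\ve^{-N}$. Since $\chi$ depends on the product $\tilde\tau\lambda$, a monotonicity argument along the positive real axis will rule out the birth of unstable eigenvalues from $\lambda=0$, so the transition from stability to instability must occur via a complex conjugate pair crossing the imaginary axis. Setting $\lambda=i\omega$ and separating real and imaginary parts of the dispersion relation produced by the NLEP determines a first crossing value $\tau_{h,\ve}\in(\tilde\tau_{1,\ve}\ve^{-N},\tilde\tau_{2,\ve}\ve^{-N})$ and an associated frequency $\omega_h>0$. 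Transversality is then obtained by implicit differentiation of the dispersion relation, showing $\partial_\tau\mathrm{Re}\,\lambda(\tau_{h,\ve})\neq 0$ from the explicit form of $\chi$.

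The main obstacle will be the rigorous derivation and control of $\chi(\tilde\tau\lambda,\ve)$ in the two-dimensional case. In $N=2$ the Neumann Green's function of $-\Delta$ on $B_R$ has a logarithmic singularity at $x_\ve$, so matching the inner spike region to the outer diffusive region generates $\log\ve$ corrections in the averaged equation for $\psi_\ve$; these must be tracked consistently so that $\chi$ admits a well-defined $\ve$-uniform limit and the stability threshold is crossed exactly once as $\tau$ grows. A secondary technical point is to show that the $O(\ve)$ remainder in the spike profile from Theorem \ref{thm1} does not perturb the leading-order NLEP, which I would handle via quantitative resolvent bounds for $\Delta-1+3w^2$ restricted to the orthogonal complement of its kernel, combined with the Lyapunov--Schmidt framework already used in the existence proof.
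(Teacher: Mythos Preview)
Your reduction to an NLEP with a single nonlocal coefficient $\chi(\tilde\tau\lambda,\ve)$ multiplying $w^3\int w^2\Phi/\int w^3$ is not correct, and this error propagates through the rest of the argument. When you linearize \eqref{eqn0}, the time-derivative term $\tau(u^2v)_t$ produces, after linearization, a contribution $\tau\lambda_\ve(2u_\ve v_\ve\phi_\ve+u_\ve^2\psi_\ve)$. The piece $2u_\ve v_\ve\phi_\ve$ enters the solvability condition for $\psi_\ve$ with weight $\int w\phi$, not $\int w^2\phi$. The correct limiting NLEP therefore carries \emph{two} nonlocal terms,
\[
L_0\phi-\frac{3}{1+\tilde\tau\lambda}\,\frac{\int_{\R^2}w^2\phi}{\int_{\R^2}w^3}\,w^3-\frac{2\tilde\tau\lambda}{1+\tilde\tau\lambda}\,\frac{\int_{\R^2}w\phi}{\int_{\R^2}w^2}\,w^3=\lambda\phi,
\]
and the second one does \emph{not} vanish as $\tilde\tau\lambda\to\infty$: its coefficient tends to $2$. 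Hence the large-$\tau$ limiting problem is not the local operator $L_0$ but the nonlocal problem $L_0\phi-2w^3\int w\phi/\int w^2=\lambda_\infty\phi$, and one can show (via an $L_1$-type quadratic form argument) that any eigenvalue of this limit with $\mathrm{Re}\,\lambda_\infty\ge 0$ must in fact satisfy $\lambda_\infty=0$. So your Rouch\'e argument placing a real unstable eigenvalue near the principal eigenvalue $\mu_0>0$ of $L_0$ cannot work.

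The actual mechanism of instability for large $\tilde\tau$ is more delicate: one shows that any eigenvalue with $\mathrm{Re}\,\lambda\ge 0$ satisfies $\lambda\to 0$ and $|\tilde\tau\lambda|\to\infty$, then expands $\phi=w+\lambda(1+o(1))w_0+\cdots$ with $w_0=L_0^{-1}w$, and projects onto $w_0$ to derive the algebraic relation $\lambda^3\sim -c_0/\tilde\tau$ with $c_0>0$. The unstable eigenvalues are therefore the complex pair $\lambda\sim c_0^{1/3}\tilde\tau^{-1/3}e^{\pm i\pi/3}$, tending to $0$ as $\tilde\tau\to\infty$, not a real eigenvalue near $\mu_0$. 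Your Hopf-bifurcation step then follows by a continuation argument, but the dispersion relation you would write down from your one-term NLEP is the wrong one. Finally, your concern about logarithmic Green's-function corrections is a red herring here: the hypothesis $D_0(\ve)\to\infty$ forces $\psi_\ve$ to be constant to leading order directly (by the same estimate as Lemma~\ref{lem-est-v}), so no inner--outer matching through a $2$D Green's function is needed.
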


\begin{thm}\label{stability-1d}
Let $N=1$. Assume the conditions in Theorem \ref{thm1} hold. Then, for all small $\ve>0$, there exists $0<\tilde{\tau}_{1, \ve}\leq \tilde{\tau}_{2, \ve}<\infty$ such that for all $0<\tau\leq\tilde{\tau}_{1, \ve}\ve^{-N}$ and $\tau\geq \tilde{\tau}_{2,\ve}\ve^{-N}$ the spike solution of Theorem \ref{thm1} is stable.
\end{thm}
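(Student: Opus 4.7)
The plan is to reduce the linearized stability to a nonlocal eigenvalue problem (NLEP) for the inner profile $\tilde\phi(y)=\phi(\ve y)$, in the spirit of the Wei--Winter framework, and then analyze this NLEP in the two extreme regimes of $\tau$. First I would linearize \eqref{eqn_t_1} at $(A_\ve,V_\ve)$, look for unstable modes $e^{\lambda t}(\Phi,\Psi)$, and rescale to the $(u,v)$-variables of \eqref{eqn0}. Since $D(\ve)\to\infty$, the $\psi$-equation forces $\psi(x)=\psi_0+O(D_0(\ve)^{-1})$ uniformly on $\Omega$. In the inner variable $y=x/\ve$, the first equation becomes $(L-\lambda)\tilde\phi=-v_0^{-3/2}w^3\psi_0$ with $L\tilde\phi:=\tilde\phi''-\tilde\phi+3w^2\tilde\phi$. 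Integrating the slow equation over $\Omega$ and using $u_\ve\sim w(x/\ve)/\sqrt{v_0}$, $v_\ve\sim v_0$ produces an algebraic relation between $\psi_0$ and $\int w\tilde\phi\,dy,\int w^2\tilde\phi\,dy$. Eliminating $\psi_0$ gives the NLEP
\[
L\tilde\phi-\frac{3\int_\R w^2\tilde\phi\,dy+2\lambda\xi\int_\R w\tilde\phi\,dy}{\int_\R w^3\,dy+\lambda\xi\int_\R w^2\,dy}\,w^3=\lambda\tilde\phi,\qquad \xi:=\tau\ve\sqrt{v_0}.
\]

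For the small-$\tau$ regime ($\xi\to 0$), the NLEP degenerates to $L\tilde\phi-3\,(\int w^2\tilde\phi/\int w^3)\,w^3=\lambda\tilde\phi$. The identity $Lw=2w^3$ yields $L^{-1}w^3=\tfrac12 w$, so evaluating the associated transcendental condition at $\lambda=0$ gives $\int w^3=\tfrac32\int w^3$, a contradiction, hence $\lambda=0$ is never an eigenvalue. A Nyquist/argument-principle argument applied to the meromorphic function $F(\lambda):=\int w^3-3\int w^2(L-\lambda)^{-1}w^3$ then shows that all roots satisfy $\operatorname{Re}\lambda\le -c<0$. This part of the analysis works the same way for $N=1$ and $N=2$.

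For the large-$\tau$ regime ($\xi\to\infty$), the NLEP degenerates to the Schnakenberg-type NLEP $L\tilde\phi-2\,(\int w\tilde\phi/\int w^2)\,w^3=\lambda\tilde\phi$, which admits $\lambda=0$ as an eigenvalue with eigenfunction $w$ (again via $Lw=2w^3$). For $\xi$ large but finite, two small eigenvalues branch off from zero. Writing the full eigenvalue condition as $\int w^3+\lambda\xi\int w^2=3M_1(\lambda)+2\lambda\xi M_2(\lambda)$ with $M_j(\lambda):=\int w^j(L-\lambda)^{-1}w^3$, and using $L^{-1}w=\tfrac12(w+yw')$ to compute $M_2'(0)=\tfrac18\int w^2\,dy>0$, the leading-order roots are $\lambda\sim\pm i\sqrt{2\int w^3/\int w^2}\,\xi^{-1/2}$, so they hug the imaginary axis. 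The decisive $O(\xi^{-1})$ correction to $\operatorname{Re}\lambda$ must then be extracted from $M_1'(0),M_1''(0),M_2''(0)$, and the main obstacle is to show that this correction is strictly negative when $N=1$. The opposite sign in $N=2$ is precisely the mechanism producing the Hopf bifurcation asserted in Theorem \ref{stability-2d}; the split is driven by the dimension-dependent moments $\int_{\R^N}w^k\,dy$ and the corresponding matrix elements of $L^{-1}$.

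Finally, the $O(\ve)+O(D_0(\ve)^{-1})$ errors incurred in passing from the exact eigenvalue problem to the limiting NLEP are handled by a Lyapunov--Schmidt reduction in the spirit of \cite{Berestycki2014}. Because the radial, concentrated-at-the-origin set-up removes the translation zero mode, no finite-dimensional kernel correction is needed, and the spectral gap obtained in the small-$\xi$ and large-$\xi$ regimes transfers to the full linearized operator for all sufficiently small $\ve$, yielding the two stability ranges $0<\tau\le\tilde\tau_{1,\ve}\ve^{-N}$ and $\tau\ge\tilde\tau_{2,\ve}\ve^{-N}$ claimed in the theorem.
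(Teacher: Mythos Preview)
Your outline matches the paper's strategy almost step for step: reduce to the NLEP \eqref{NLEP++0} with parameter $\tilde\tau\sim\xi$, handle $\tilde\tau$ small by the known result of \cite{Wei2001}, and for $\tilde\tau$ large show that any eigenvalue with $\operatorname{Re}\lambda\ge0$ must be a small perturbation of $0$ and then compute its sign. Two genuine gaps remain, however, and they are exactly the points where the paper does real work.

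First, you assert that for $\xi\to\infty$ the relevant eigenvalues ``branch off from zero'', but you do not prove that the limiting problem $L\tilde\phi-2\bigl(\int w\tilde\phi/\int w^2\bigr)w^3=\lambda_\infty\tilde\phi$ has no \emph{other} eigenvalue with $\operatorname{Re}\lambda_\infty\ge0$. Admitting $\lambda=0$ with eigenfunction $w$ is easy; excluding everything else is not, and without it your perturbation argument only locates two small eigenvalues while leaving the rest of the right half-plane unchecked. The paper handles this (Claims~1--3 of Section~\ref{sec: NLEP2}) by introducing the self-adjoint operator $L_1$ of \cite{Wei1999}, proving the quadratic-form inequality $L_1(\phi,\phi)\ge a_1\,d^2_{L^2}(\phi,X_1)$, and deducing $\lambda_\infty=0$ via an identity that cancels the $\lambda_I$-terms. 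It also needs $|\tilde\tau\lambda|\to\infty$, which rules out finite limits of $\tilde\tau\lambda$; this is a separate algebraic argument.

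Second, the step you flag as ``the main obstacle'' is not merely an obstacle---it is the entire content of the one-dimensional theorem, and your proposal gives no indication of how to resolve it. The paper carries it out explicitly: after writing $\phi=w+\phi^\perp$ and projecting onto $w_0=L_0^{-1}w$, one finds $\lambda=\pm\sqrt{2}\,\tilde\tau^{-1/2}i+b\,\tilde\tau^{-1}$ with
\[
b=\frac{\int_\R y^2 w_y^2}{\int_\R w^2}-1+o(1).
\]
The sign of $b$ is not accessible through general identities for $w$; one must use the closed form $w(y)=\sqrt{2}\operatorname{sech} y$ and compute $\int_\R y^2w_y^2=\tfrac{8}{3}+\tfrac{\pi^2}{9}$ and $\int_\R w^2=4$, giving $b=(\pi^2-12)/36<0$. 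In your $M_j$-language the same numbers would emerge from $M_1'(0),M_1''(0),M_2''(0)$, but you have not computed them (and your stated value $M_2'(0)=\tfrac18\int w^2$ is already off: one gets $M_2'(0)=\tfrac12\int w^2w_0=\tfrac16\int w^3$). Without this explicit evaluation the proof is incomplete, since the dimension-dependence you invoke only manifests through these exact constants.
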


\medskip

Theorem \ref{stability-1d}  suggests that in the one dimensional case, the spike solution of Theorem \ref{thm1} is stable for all $\tau>0$. This is in sharp contrast to the two dimensional case, as depicted by theorem \ref{stability-2d}, when Hopf bifurcation occurs at some $\tau=\tau_{h,\ve}\in (0, \infty)$.

\medskip

The rest of the paper is organized as follows. In Section 2, we first collect some preliminary facts, which  play important roles in the rest of the paper. Then we reduce the system for the steady state $(u_{\ve}, v_{\ve})$ to a single equation by showing that $v_{\ve}$ is almost flat. Section 3 is used to derive a nonlocal eigenvalue problem, which provides the basis for the stability (and nondegeneracy) analysis for the spike solution we are going to construct. In Section 4 we prove Theorem \ref{thm1}. Section 5 and 6 are devoted to the proof of Theorem \ref{stability-2d} and Theorem \ref{stability-1d}, respectively.

Throughout this paper we always assume that $N=1,2$.

\section{Reduction to a single equation}\label{sec:redToSin}
Let $N=1, 2$ and $w$ be the unique solution satisfying (\ref{eq-w})-(\ref{w}).
We also recall that $w'(|y|)<0$ for $|y|>0$, and there is a constant $A_N>0$ such that
\begin{equation}
w(r)=A_N r^{-\frac{N-1}{2}}e^{-r}\left(1+O\left(r^{-1}\right)\right)\qquad\text{as}\;\;r=|y|\to +\infty,
\end{equation}
\begin{equation}
w'(r)=-A_N r^{-\frac{N-1}{2}}e^{-r}\left(1+O\left(r^{-1}\right)\right)
\qquad\text{as}\;\;r=|y|\to +\infty.
\end{equation}

From the energy identity
\[
\int_{\R^N}|\nabla w|^2+\int_{\R^N}w^2-\int_{\R^N}w^4=0,
\]
and the Pohozaev identity
\begin{equation}
\frac{2-N}{2N}\int_{\R^N}|\nabla u|^2-\frac{1}{2}\int_{\R^N}w^2+\frac{1}{4}\int_{\R^N}w^4=0,
\end{equation}
we  have
\begin{equation}
\int_{\R^N}w^4=\frac{4}{4-N}\int_{\R^N}w^2,\qquad\int_{\R^N}|\nabla w|^2=\frac{N}{4-N}\int_{\R^N}w^2.
\end{equation}
Direct integration of  equation \eqref{eq-w}  yields
\begin{equation}
\int_{\R^N}w=\int_{\R^N}w^3.
\end{equation}

Let us denote
\begin{equation}\label{L0}
L_0[\phi]=\Delta \phi-\phi+3w^2\phi,\qquad \phi\in H^1(\R^N).
\end{equation}
Then we have the following well-known results (Theorem 2.1 of \cite{Lin1988} and Lemma C of \cite{Ni1992}).
\begin{lem}\label{lem:L0}
The eigenvalue problem
\begin{equation}
L_0[\phi]=\mu\phi,\qquad\phi\in H^1(\R^N),
\end{equation}
admits the following sets of eigenvalues
\begin{equation}
\mu_0>0, \quad \mu_1=\mu_2=\cdots=\mu_{N}=0, \quad\mu_{N+1}<0,\cdots.
\end{equation}
The eigenfunction $\phi_0$ corresponding to $\mu_0$ is simple and can be made positive and radial symmetric; the space of eigenfunctions corresponding to the eigenvalue $0$ is
\begin{equation}
X_0=Kernel(L_0):=\text{Span}\left\{\frac{\partial w}{\partial y_j}\Big| j=1, \cdots, N\right\}.
\end{equation}
\end{lem}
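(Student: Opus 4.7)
The plan is to decompose $H^1(\R^N)$ by spherical harmonics and analyze the resulting radial operators sector-by-sector. Write $\phi\in H^1(\R^N)$ as $\phi(y)=\sum_{k\geq 0}\phi_k(r)Y_k(\theta)$, where $\{Y_k\}$ runs over an orthonormal basis of spherical harmonics on $S^{N-1}$ with $-\Delta_{S^{N-1}}Y_k=\lambda_k Y_k$, $\lambda_k=k(k+N-2)$ (multiplicity $1$ for $k=0$, multiplicity $N$ for $k=1$, higher for $k\geq 2$). The operator $L_0$ then becomes the direct sum of
\begin{equation*}
L_{0,k}\phi_k=\phi_k''+\frac{N-1}{r}\phi_k'-\phi_k+3w^2\phi_k-\frac{\lambda_k}{r^2}\phi_k.
\end{equation*}
In each sector this is an ordinary differential operator of Schr\"odinger type on $(0,\infty)$ with a positive, rapidly decaying potential perturbation $3w^2$, and Sturm--Liouville/oscillation theory applies in the standard weighted $L^2$ space $L^2((0,\infty),r^{N-1}\,dr)$.

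First I would produce the kernel. Differentiating $\Delta w-w+w^3=0$ in $y_j$ gives $L_0[\partial_{y_j}w]=0$, and since $\partial_{y_j}w=w'(r)\,y_j/r$ with $y_j/|y|$ a degree-one spherical harmonic, $w'(r)$ lies in $\ker L_{0,1}$, producing $N$ linearly independent kernel elements. The nondegeneracy statement—that $\ker L_{0,1}=\mathrm{span}\{w'\}$ and that $L_{0,k}$ has no kernel in $H^1$ for $k\neq 1$—is the classical uniqueness/nondegeneracy result of Kwong \cite{Kwong1991} and Lin--Ni \cite{Lin1988}; it rests on a Wronskian/ODE analysis showing that the second linearly independent solution of $L_{0,1}\phi=0$ cannot be simultaneously integrable at $0$ and decaying at infinity. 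I would invoke this directly as in the cited references.

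Next I would treat the radial sector $k=0$. Direct computation using $\Delta w-w+w^3=0$ gives $L_0[w]=2w^3$, hence $\langle L_0 w,w\rangle=2\int_{\R^N}w^4>0$, so the principal radial eigenvalue $\mu_0$ is strictly positive. Because $L_{0,0}$ is a Schr\"odinger-type operator on the half-line with a continuous potential of the required form, its principal eigenfunction $\phi_0$ is simple and of one sign (standard Perron--Frobenius / ground-state argument), and by the Courant nodal bound, $\phi_0$ is the unique positive-eigenvalue eigenfunction in the radial sector; zero is not a radial eigenvalue because any radial $H^1$ solution of $L_{0,0}\phi=0$ would, by the nondegeneracy cited above, have to lie in $\mathrm{span}\{\partial_{y_j}w\}$, which is empty in the radial sector.

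Finally, for $k\geq 2$, write $L_{0,k}=L_{0,1}-(\lambda_k-\lambda_1)/r^2$. Since the top eigenvalue of $L_{0,1}$ equals $0$ (attained along $w'(r)$, which is of one sign on $(0,\infty)$, so it is the ground state of $L_{0,1}$), the addition of the strictly negative potential $-(\lambda_k-\lambda_1)/r^2$ shifts the entire spectrum strictly downward by the minimax characterization, giving only strictly negative eigenvalues in these sectors. The main obstacle is the nondegeneracy of $L_{0,1}$ in Step~2 above—this is the deep ingredient—but since the lemma is stated as classical, I would cite \cite{Kwong1991} and \cite{Lin1988} for it and present the spherical-harmonic decomposition and the radial/higher-mode arguments in detail.
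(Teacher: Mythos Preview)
The paper does not prove this lemma at all; it is quoted as a known result with references to \cite{Lin1988} and \cite{Ni1992}. Your proposal therefore supplies more than the paper does, and the spherical-harmonic decomposition you outline is precisely the standard route underlying those references: the kernel identification via $\partial_{y_j}w$, the observation that $w'$ is of one sign on $(0,\infty)$ and hence is the ground state of $L_{0,1}$, and the minimax comparison $L_{0,k}=L_{0,1}-(\lambda_k-\lambda_1)/r^2$ for $k\geq 2$ are all correct and standard.

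One step is phrased loosely. The assertion that the radial sector $L_{0,0}$ carries \emph{only one} positive eigenvalue does not follow from the Courant nodal theorem, which bounds the number of nodal domains of the $n$-th eigenfunction but says nothing about the sign of the $n$-th eigenvalue. What is actually used in the literature is Sturm oscillation on the half-line: the second radial eigenfunction has exactly one interior zero, and a comparison argument (or, in $N=1$, the direct observation that $w'$ itself has exactly one zero on $\R$ and eigenvalue $0$, hence is the second eigenfunction of the full operator) pins its eigenvalue below zero. Since you are already deferring the nondegeneracy to \cite{Kwong1991} and \cite{Lin1988}, this point is covered by the same body of results; just replace ``Courant nodal bound'' by the appropriate Sturm-oscillation statement when you write it out.
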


Denote \[w_0:=\frac{1}{2}w+\frac{1}{2}y\nabla w.\]
Direct calculation yields
\begin{equation}
L_0[w]=2w^3,\hskip 2cm L_0 [w_0]=w,
\end{equation}
and
\begin{equation}
\int_{\R^N}wL^{-1}_0[w]=\int_{\R^N}ww_0=\left(\frac{1}{2}-\frac{N}{4}\right)\int_{\R^N}w^2,
\end{equation}
\begin{equation}
\int_{\R^N}w^3L^{-1}_0[w]=\int_{\R^N}w^3w_0=\frac{1}{2}\int_{\R^N}w^2.
\end{equation}

In this paper we use $B_r$ to denote the open ball in $\R^N$ centred at the origin and with radius $r\in (0, \infty)$. Let us assume $\Omega=B_R$ for some fixed $R\in (0, \infty)$. We also assume that the diffusion coefficient of the equation for $v_{\ve}$ is suitably large, that is \eqref{D-ve} holds.
Then the steady state of \eqref{eqn0} satisfies the system
\begin{equation}\label{eq}
\left\{\begin{aligned}
&\ve^2\Delta u-u+vu^3+\alpha_{0}\ve^N=0, &&x\in B_R,\\
&\frac{D_0(\ve)}{\ve^{2N} }\nabla (u^2 \nabla v)-\ve^{-N}vu^3+\gamma_0=0, \;\;&&x\in B_R,\\ &\partial_{n}u=\partial_{n}v=0,\quad &&x\in \partial B_R.
\end{aligned}\right.
\end{equation}
We only consider radial solutions. Under this assumption, \eqref{eq} is equivalent to
\begin{equation}\label{eq-r}
\left\{\begin{aligned}
&\ve^2\nabla_r(r^{N-1}\nabla_ru)=r^{N-1}(u-vu^3-\alpha_{0}\ve^N), \quad\;\;&&0<r<R,\\
&\frac{D_0(\ve)}{\ve^{2N}}\nabla_r(r^{N-1}u^2 \nabla_rv)=r^{N-1}(\ve^{-N}vu^3-\gamma_0),\quad \;\; &&0<r<R,\\
&\nabla_ru(0)=\nabla_ru(R)=\nabla_rv(0)=\nabla_rv(R)=0,\\
\end{aligned}\right.
\end{equation}
where  $r=|x|$ and $\nabla_r$ denotes differentiation with respect to $r$. Here and in the rest of the paper, for a radial function $f(x)$, we abuse the notation a bit and use $f(|x|)$ to denote the same function.

Given $u_{\ve}>0$, let $v_{\ve}$ be the unique solution of the following linear problem
\begin{equation}\label{eq-v}
\left\{\begin{aligned}
&\frac{D_0(\ve)}{\ve^{2N} }\nabla_r(r^{N-1}u_{\ve}^2 \nabla_rv_{\ve})=r^{N-1}(\ve^{-N}v_{\ve}u_{\ve}^3-\gamma_0), \;\; 0<r<R,\\
& \nabla_rv_{\ve}(0)=\nabla_rv_{\ve}(R)=0.
\end{aligned}\right.
\end{equation}
By the maximum principle, $v_{\ve}>0$.

Integrating the equation \eqref{eq-v} over $[0, r]$ for $r\in (0, R]$ we obtain
\begin{equation}\label{dv}
\nabla_rv_{\ve}(r)=\frac{\ve^{2N}}{D_0(\ve)}\frac{1}{r^{N-1}u_{\ve}^2(r)}\int_0^r[\ve^{-N}s^{N-1}v_{\ve}(s)u_{\ve}^3(s)-\gamma_0 s^{N-1}]ds.
\end{equation}
Let us assume $\|v_{\ve}\|_\infty$ is bounded away  from $0$ and $\infty$ as $\ve\to 0$, and
\begin{equation}\label{u-p1}
u_{\ve}(r)=\alpha_{0}\ve^N+[v_{\ve}(0)]^{-1/2}w\left(\frac{r}{\ve}\right)+\phi_{\ve}(r)
\end{equation}
with $\phi_{\ve}(r)$ satisfying
\begin{equation}\label{u-p2}
|\phi_{\ve}(r)|\leq C\ve^{N+1}\max(e^{-\frac{r}{2\ve}}, \sqrt{\ve}),
\end{equation}
where $C>0$ is some large constant.

A key observation about $v_{\ve}$ is the following estimate.
\begin{lem}\label{lem-est-v}
For any $v_{\ve}\in [c_1, c_2]$ with constants $c_1>0$ and $c_2>0$, if $u_{\ve}$ has the form \eqref{u-p1} and \eqref{u-p2}, then
\begin{equation}\label{dv3}
|\nabla_rv_{\ve}(r)|=O\left(\frac{1}{D_0(\ve)}\right) r,
\end{equation}
for all $r\in [0, R]$, and
as a consequence,
\begin{equation}\label{var-v}
|v_{\ve}(r)-v_{\ve}(0)|=O\left(\frac{1}{D_0(\ve)}\right)r^2,
\end{equation}
for all $r\in [0, R]$.
\end{lem}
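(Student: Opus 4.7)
The plan is to work directly from the integral representation \eqref{dv}. Setting
\[
F(r):=\int_0^r\bigl[\ve^{-N}s^{N-1}v_{\ve}(s)u_{\ve}^3(s)-\gamma_0 s^{N-1}\bigr]\,ds,
\]
equation \eqref{dv} reads $\nabla_rv_{\ve}(r)=\ve^{2N}F(r)/\bigl(D_0(\ve)r^{N-1}u_{\ve}^2(r)\bigr)$, so the task reduces to bounding $|F(r)|/\bigl(r^{N-1}u_{\ve}^2(r)\bigr)$ pointwise on $[0,R]$. The boundary condition $\nabla_rv_{\ve}(R)=0$ is equivalent to the exact identity $F(R)=0$, which is the source of the cancellation that produces the factor $r$ in \eqref{dv3}.

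I would first obtain lower bounds on $u_{\ve}$ from \eqref{u-p1}--\eqref{u-p2}: for $\ve$ small, $u_{\ve}(r)\ge\tfrac12\alpha_0\ve^N+\tfrac12v_{\ve}(0)^{-1/2}w(r/\ve)$, so $u_{\ve}^2(r)\ge c\max\{\ve^{2N},\,w^2(r/\ve)\}$ with $c>0$ independent of $\ve$. Then I would insert \eqref{u-p1} into $u_{\ve}^3$ and isolate the leading term $v_{\ve}(0)^{-3/2}w^3(s/\ve)$. After the substitution $y=s/\ve$ the dominant part of $\int_0^r\ve^{-N}s^{N-1}v_{\ve}u_{\ve}^3\,ds$ becomes $v_{\ve}(0)^{-1/2}\int_0^{r/\ve}y^{N-1}w^3(y)\,dy$. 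The definition of $v_0$ in Theorem~\ref{thm1}, together with $|B_R|=\omega_{N-1}R^N/N$ and $\int_{\R^N}w^3=\omega_{N-1}\int_0^\infty y^{N-1}w^3(y)\,dy$, yields the key identity $v_0^{-1/2}\int_0^\infty y^{N-1}w^3(y)\,dy=\gamma_0 R^N/N$. Using it with $|v_{\ve}(0)-v_0|=o(1)$, the exponential decay of $w$, and \eqref{u-p2}, I would obtain
\[
F(r)=\frac{\gamma_0}{N}\bigl(R^N-r^N\bigr)+\mathcal{E}(r),
\]
with $|\mathcal{E}(r)|$ bounded by an exponentially small tail $v_0^{-1/2}\int_{r/\ve}^\infty y^{N-1}w^3(y)\,dy$ plus cross terms of size $O(\ve^{N+1})$.

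The bound \eqref{dv3} would then follow by splitting $[0,R]$ into the core $r\le\ve$, the transition $\ve\le r\le C\ve|\log\ve|$, and the outer zone $r\ge C\ve|\log\ve|$. In the core one uses $|F(r)|\le Cr^N/\ve^N$ and $u_{\ve}^2\ge c$ to get $|\nabla_rv_{\ve}(r)|\le C\ve^N r/D_0(\ve)$. In the transition one uses the sharper bound $r^{N-1}u_{\ve}^2\ge c\ve^{N-1}e^{-2r/\ve}$ obtained from the exponential profile of $w$, giving $|\nabla_rv_{\ve}(r)|\le C\ve^{N+1}e^{2r/\ve}|F(r)|/D_0(\ve)$, which stays $\le Cr/D_0(\ve)$ on this range. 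In the outer zone $u_{\ve}^2\ge c\ve^{2N}$ together with $F(r)=\gamma_0(R^N-r^N)/N+\mathcal{E}(r)$ produces $|\nabla_rv_{\ve}(r)|\le C(R^N-r^N)/\bigl(D_0(\ve)r^{N-1}\bigr)\le Cr/D_0(\ve)$. Finally \eqref{var-v} follows from \eqref{dv3} by integration: $|v_{\ve}(r)-v_{\ve}(0)|\le\int_0^r|\nabla_sv_{\ve}(s)|\,ds\le(C/D_0(\ve))\int_0^rs\,ds=Cr^2/\bigl(2D_0(\ve)\bigr)$.

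The main obstacle I anticipate is the uniform control of the error $\mathcal{E}(r)$ in the transition zone $r\sim\ve|\log\ve|$, where the spike profile crosses over into the small outer background $\alpha_0\ve^N$, and the seamless patching of the three regime bounds. The cross terms in the cubic expansion $u_{\ve}^3=(\alpha_0\ve^N+v_{\ve}(0)^{-1/2}w(s/\ve)+\phi_{\ve})^3$, together with the replacement of $v_{\ve}(s)$ by $v_{\ve}(0)$ inside the integrand (which closes via a bootstrap on \eqref{var-v} itself), require careful book-keeping of orders in $\ve$.
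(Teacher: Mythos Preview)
Your approach is genuinely different from the paper's and more delicate than what the paper actually does. The paper never uses the boundary identity $F(R)=0$, the value $v_0$, or any cancellation between the two pieces of \eqref{dv}. Instead it splits \eqref{dv2} into the explicit $\gamma_0$-term, which is bounded immediately by $Cr/D_0(\ve)$ via $u_\ve(r)\ge\tfrac12\alpha_0\ve^N$, and the integral term $\frac{\ve^N}{D_0(\ve)r^{N-1}u_\ve^2(r)}\int_0^r s^{N-1}v_\ve u_\ve^3\,ds$. The latter is handled by partitioning $[0,R]$ into roughly six subintervals $[k\ve|\log\ve|/4,(k+1)\ve|\log\ve|/4]$ and using on each a two-sided pointwise bound $c_1\ve^{a_k}\le u_\ve\le c_2\ve^{b_k}$, so that the powers of $\ve$ coming from $u_\ve^3$ in the numerator and from $u_\ve^2$ in the denominator balance to give a linear-in-$r$ bound case by case. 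No bootstrap on $v_\ve$ is needed.

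Your outline has two gaps. First, the input $|v_\ve(0)-v_0|=o(1)$ is circular: in the paper this is a \emph{consequence} of the lemma (derived immediately after it), so it cannot be assumed in the proof. You could bypass this by writing $F(r)=-\int_r^R[\cdots]\,ds$ directly from $F(R)=0$, but your derivation of $F(r)=\gamma_0(R^N-r^N)/N+\mathcal E(r)$ goes through the specific value of $v_0$ instead.

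Second, and more seriously, your outer-zone inequality $(R^N-r^N)/r^{N-1}\le Cr$ is false near the inner edge of that zone. For $r$ of order $C\ve|\log\ve|$ the left side is of order $R^N/(\ve|\log\ve|)^{N-1}$, which is nowhere near $O(r)$ (and for $N=2$ it even blows up as $\ve\to0$). Indeed the crude bound $u_\ve^2\ge c\ve^{2N}$ alone yields only $|\nabla_r v_\ve(r)|\le C|F(r)|/\bigl(D_0(\ve)r^{N-1}\bigr)$, and since by that point the entire spike mass has been integrated, $|F(r)|$ is of unit size and no factor of $r$ appears. What the paper's six-interval decomposition supplies, and your three-zone split does not, is a careful matching of the $\ve$-powers in numerator and denominator across the whole transition layer; your argument would need an analogous refinement in the range between $r\sim\ve$ and $r$ of unit size.
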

\begin{proof}
We show \eqref{dv3} first. By the formula \eqref{dv} we have
\begin{equation}\label{dv2}
\nabla_rv_{\ve}(r)=-\frac{\gamma_0\ve^{2N}r}{ND_0(\ve)u^2_{\ve}(r)}+\frac{\ve^N}{D_0(\ve)r^{N-1}u^2_{\ve}(r)}\int_0^rs^{N-1}v_{\ve}(s)u^3_{\ve}(s)ds.
\end{equation}

Since $u_{\ve}(r)\geq \frac{1}{2}\alpha_{0}\ve^N$, the first term on the right-hand side of \eqref{dv2} is easy to estimate:
\[\frac{\gamma_0\ve^{2N}r}{ND_0(\ve)u^2_{\ve}(r)}\leq \frac{4\gamma_0 r}{N\alpha_{0}^2D_0(\ve)}=O\left(\frac{1}{D_0(\ve)}\right)r.\]

To estimate the second term on the right-hand side of \eqref{dv2}, we divide the interval into several subintervals.

\begin{itemize}

\item For $r\in [0, \ve|\log{\ve}|/4)$, we have
\begin{equation}
u_{\ve}(r)\geq c\ve^{1/4}\qquad\text{and}\qquad  |u_{\ve}(r)|=O(1),
\end{equation}
and hence
\[
\frac{\ve^N}{D_0(\ve)r^{N-1}u^2_{\ve}(r)}\int_0^rs^{N-1}v_{\ve}(s)u^3_{\ve}(s)ds
\leq \frac{\|v_{\ve}\|_{\infty}\|u_{\ve}\|^3_{\infty}\ve^N}{D_0(\ve)r^{N-1}u^2_{\ve}(r)}\int_0^rs^{N-1}ds\]
\[=\frac{\|v_{\ve}\|_{\infty}\|u_{\ve}\|^3_{\infty}\ve^N}{ND_0(\ve)u^2_{\ve}(r)}r
\leq \frac{\|v_{\ve}\|_{\infty}\|u_{\ve}\|^3_{\infty}\ve^{N-\frac{1}{2}}}{ND_0(\ve)c^2}r\]
\[=O\left(\frac{\ve^{N-\frac{1}{2}}}{D_0(\ve)}\right)r.\]

\item For $r\in [\ve|\log{\ve}|/4, \ve|\log{\ve}|/2)$, we have
\begin{equation}
c_1\ve^{1/2}\leq u_{\ve}(r)\leq c_2\ve^{1/4}
\end{equation}
for some constants $c_1>0$ and $c_2>0$,
and hence
\[
\frac{\ve^N}{D_0(\ve)r^{N-1}u^2_{\ve}(r)}\int_0^rs^{N-1}v_{\ve}(s)u^3_{\ve}(s)ds\]
\[\leq \frac{\|v_{\ve}\|_{\infty}\ve^N}{D_0(\ve)r^{N-1}c_1^2\ve}\int_0^rc_2^3\ve^{3/4}s^{N-1}ds\]
\[=\frac{c_2^3\|v_{\ve}\|_{\infty}\ve^{N-\frac{1}{4}}}{N c_1^2D_0(\ve)}r
=O\left(\frac{\ve^{N-\frac{1}{4}}}{D_0(\ve)}\right)r.\]

\item For  $r\in [\ve|\log{\ve}|/2, 3\ve|\log{\ve}|/4)$, we have
\begin{equation}
c_1\ve^{3/4}\leq u_{\ve}(r)\leq c_2\ve^{1/2},
\end{equation}
for some constants $c_1>0$ and $c_2>0$,
and hence
\begin{equation}
\begin{aligned}
&\frac{\ve^N}{D_0(\ve)r^{N-1}u^2_{\ve}(r)}\int_0^rs^{N-1}v_{\ve}(s)u^3_{\ve}(s)ds\\
&\leq \frac{\|v_{\ve}\|_{\infty}\ve^N}{D_0(\ve)r^{N-1}c_1^2\ve^{3/2}}\int_0^rc_2^3\ve^{3/2}s^{N-1}ds\\
&=\frac{c_2^3\|v_{\ve}\|_{\infty}\ve^{N}}{N c_1^2D_0(\ve)}r
=O\left(\frac{\ve^{N}}{D_0(\ve)}\right)r.
\end{aligned}
\end{equation}

\item For  $r\in [3\ve|\log{\ve}|/4, \ve|\log{\ve}|)$, we have
\begin{equation}
c_1\ve\leq u_{\ve}(r)\leq c_2\ve^{3/4},
\end{equation}
for some constants $c_1>0$ and $c_2>0$,
and hence
\begin{equation}
\begin{aligned}
&\frac{\ve^N}{D_0(\ve)r^{N-1}u^2_{\ve}(r)}\int_0^rs^{N-1}v_{\ve}(s)u^3_{\ve}(s)ds\\
&\leq \frac{\|v_{\ve}\|_{\infty}\ve^N}{D_0(\ve)r^{N-1}c_1^2\ve^{2}}\int_0^rc_2^3\ve^{9/4}s^{N-1}ds\\
&=\frac{c_2^3\|v_{\ve}\|_{\infty}\ve^{N+\frac{1}{4}}}{N c_1^2D_0(\ve)}r
=O\left(\frac{\ve^{N+\frac{1}{4}}}{D_0(\ve)}\right)r.
\end{aligned}
\end{equation}

\item For  $r\in [\ve|\log{\ve}|, 5\ve|\log{\ve}|/4)$, we have
\begin{equation}
c_1\ve^{5/4}\leq u_{\ve}(r)\leq c_2\ve,
\end{equation}
for some constants $c_1>0$ and $c_2>0$,
and hence
\begin{equation}
\begin{aligned}
&\frac{\ve^N}{D_0(\ve)r^{N-1}u^2_{\ve}(r)}\int_0^rs^{N-1}v_{\ve}(s)u^3_{\ve}(s)ds\\
&\leq \frac{\|v_{\ve}\|_{\infty}\ve^N}{D_0(\ve)r^{N-1}c_1^2\ve^{5/2}}\int_0^rc_2^3\ve^{3}s^{N-1}ds\\
&=\frac{c_2^3\|v_{\ve}\|_{\infty}\ve^{N+\frac{1}{2}}}{N c_1^2D_0(\ve)}r
=O\left(\frac{\ve^{N+\frac{1}{2}}}{D_0(\ve)}\right)r.
\end{aligned}
\end{equation}

\item For  $r\in [5\ve|\log{\ve}|/4, R]$, we have
\begin{equation}
c_1\ve^{2}\leq u_{\ve}(r)\leq c_2\ve^{5/4},
\end{equation}
for some constants $c_1>0$ and $c_2>0$,
and hence
\begin{equation}
\begin{aligned}
&\frac{\ve^N}{D_0(\ve)r^{N-1}u^2_{\ve}(r)}\int_0^rs^{N-1}v_{\ve}(s)u^3_{\ve}(s)ds\\
&\leq \frac{\|v_{\ve}\|_{\infty}\ve^N}{D_0(\ve)r^{N-1}c_1^2\ve^{5/2}}\int_0^rc_2^3\ve^{3}s^{N-1}ds\\
&=\frac{c_2^3\|v_{\ve}\|_{\infty}\ve^{N-\frac{1}{4}}}{N c_1^2D_0(\ve)}r
=O\left(\frac{\ve^{N-\frac{1}{4}}}{D_0(\ve)}\right)r.
\end{aligned}
\end{equation}
\end{itemize}
This finishes the proof of \eqref{dv3}. The estimate \eqref{var-v}  then follows from \eqref{dv3} immediately by integrating equation \eqref{eq-v} over the interval $[0, r]$, for any $r\in [0, R]$.
\end{proof}

As a consequence of Lemma \ref{lem-est-v}, we can obtain the approximate value of $v_{\ve}$. Indeed,
integrating the equation  \eqref{eq-v} over $B_R$ we obtain
\begin{equation}
\int_{B_R}(\gamma_0-\ve^{-N}v_{\ve}u_{\ve}^3)dx=0.
\end{equation}
Let us set
\begin{equation}
\beta(\ve):=\frac{1}{\sqrt{D_0(\ve)}}.
\end{equation}
Then, using \eqref{u2-}, we obtain
\begin{equation*}
\begin{aligned}
&\gamma_0|B_R|=\ve^{-N}\int_{B_R}v_{\ve}\left[\alpha_{0}\ve^N+[v_{\ve}(0)]^{-1/2}w\left(\frac{x}{\ve}\right)+\phi_{\ve}(x)\right]^3dx\\
&=[1+O(\beta^2)]\left([v_{\ve}(0)]^{-1/2}\ve^{-N}\int_{B_R}w^3\left(\frac{x}{\ve}\right)dx+I_{\ve}\right)\\
&=O(\ve^N)+\left[1+O\left(\beta^2\right)\right][v_{\ve}(0)]^{-1/2}\int_{B_{R/\ve}}w^3(y)dy\\
&=\left[1+O\left(\beta^2\right)\right][v_{\ve}(0)]^{-1/2}\int_{\R^N}w^3(y)dy+O(\ve^{N}),
\end{aligned}
\end{equation*}
where
\[
I_{\ve}=\int_{B_R}\left[v_{\ve}(0)\alpha_{0}^3\ve^{2N}+3\alpha_{0}^2\ve^{N}[v_{\ve}(0)]^{\frac{1}{2}}w\left(\frac{x}{\ve}\right)+3\alpha_{0}w^2\left(\frac{x}{\ve}\right)\right]dx
\]
\[
=O(1)\times\left(\ve^{3N}|B_R|+\ve^{2N}\int_{\R^N}w(y)dy+\ve^{N}\int_{\R^N}w^2(y)dy\right)\\
\]
\[
=O(\ve^N).
\]
As a consequence we have
\begin{equation}
v_{\ve}(0)=\frac{\left(\int_{\R^N}w^3(y)dy\right)^2}{\gamma_0^2|B_R|^2}+O\left(\frac{1}{D_0(\ve)}\right),
\end{equation}
and
\begin{equation}
v_{\ve}(r)=\frac{\left(\int_{\R^N}w^3(y)dy\right)^2}{\gamma_0^2|B_R|^2}+O\left(\frac{1}{D_0(\ve)}\right),
\end{equation}
for all $r\in [0, R]$.

For the convenience, in the rest of the paper, we denote
\begin{equation}\label{v0}
v_0:=\frac{\left(\int_{\R^N}w^3(y)dy\right)^2}{\gamma_0^2|B_R|^2}.
\end{equation}

\section{A nonlocal eigenvalue problem}\label{sec: NLEP}
As a first step to study the linear stability of spike steady states of \eqref{eqn0} as $\ve\to 0$, we derive a nonlocal linear eigenvalue problem (NLEP for short).  As is well-known that, for small $\ve>0$, the stability of the spike steady states of \eqref{eqn0} is determined by this NLEP. We would like to note that the methods in this section, as those in Section \ref{sec:redToSin}, only work for the radial case.

For a ball $B\subset\R^N$, we set
\[
H^1_r(B):=\{\phi\in H^1(B)|\phi(x)=\phi(|x|)\},\]
\[\qquad L^2_r(B):=\{\phi\in L^2(B)|\phi(x)=\phi(|x|)\}.
\]
Linearizing the system \eqref{eqn0} around the steady state $(u_{\ve}, v_{\ve})$ with
\begin{equation}
\left\{\begin{aligned}
&u(x, t)=u_{\ve}(r)+\phi_{\ve}(r)e^{\lambda_{\ve} t},\quad 0<r<R,\\
&v(x, t)=v_{\ve}(r)+\psi_{\ve}(r)e^{\lambda_{\ve} t},\quad 0<r<R,\\
&\nabla_r\phi_{\ve}(0)=\nabla_r\phi_{\ve}(R)=0,\\ &\nabla_r\psi_{\ve}(0)=\nabla_r\psi_{\ve}(R)=0,
\end{aligned}\right.
\end{equation}
where $\lambda_{\ve}\in\mathbb{C}$ is some complex number and $\phi_{\ve}\in H^1_r(B_{R}), \psi_{\ve}\in H^1_{r}(B_R)$,
we deduce the following eigenvalue problem
\begin{equation}\label{lin-eq}
\left\{
\begin{aligned}&\ve^2\Delta \phi_{\ve} -\phi_{\ve}+3v_{\ve}u_{\ve}^2\phi_{\ve}+u_{\ve}^3\psi_{\ve}
=\lambda_{\ve}\phi_{\ve},\\
&\frac{D_0(\ve)}{{\ve^{2N}}}\nabla\left[u_{\ve}^2\nabla\psi_{\ve}+2u_{\ve}\phi_{\ve} \nabla v_{\ve} \right]\\
&\qquad =\ve^{-N}(3v_{\ve}u_{\ve}^2\phi_{\ve}+u_{\ve}^3\psi_{\ve})
+\tau(\ve)\lambda_{\ve}[2u_{\ve}v_{\ve}\phi_{\ve}+u_{\ve}^2\psi_{\ve}],\\
&\nabla_r\phi_{\ve}(0)=\nabla_{r}\phi_{\ve}(R)=0,\\
&\nabla_r\psi_{\ve}(0)=\nabla_r\psi_{\ve}(R)=0.
\end{aligned}
\right.
\end{equation}

If $\ve^N\tau(\ve)Re(\lambda_{\ve})< -c$ for some (small) positive constant $c>0$, then $\lambda_{\ve}$ is a stable eigenvalue. So we only consider the case  $\ve^N\tau(\ve)Re(\lambda_{\ve})\geq -c$. We shall derive the limiting eigenvalue problem as $\ve\to 0$, which turns out to be a nonlocal eigenvalue problem (NLEP). Using the method in \cite{Dancer2001} and similar arguments in Sections \ref{sec: NLEP1} and \ref{sec: NLEP2}, it is not difficult to show that the set $\{\lambda\in \mathbb{C}|\lambda_{\ve}\geq -c\ve^{-N}[\tau(\ve)]^{-1}\}$ is bounded. Hence we can assume, up to a subsequence, $\lambda_{\ve}\to \lambda\in \mathcal{C}$.

From the second equation of \eqref{lin-eq} we  obtain
\begin{equation}\label{psi1}
\begin{aligned}
\nabla_r&\psi_{\ve}(r)=-\frac{2\phi_{\ve}(r)\nabla_rv_{\ve}(r)}{u_{\ve}(r)}\\
&+\frac{\ve^N}{D_0(\ve)r^{N-1}u_{\ve}^2(r)}\int_0^rs^{N-1}[3v_{\ve}(s)u_{\ve}^2(s)\phi_{\ve}(s)+u_{\ve}^3(s)\psi_{\ve}(s)]ds\\
&+\frac{\ve^{2N}\tau(\ve)\lambda_{\ve}}{D_0(\ve)r^{N-1}u_{\ve}^2(r)}\int_0^rs^{N-1}[2u_{\ve}(s)v_{\ve}(s)\phi_{\ve}(s)+u_{\ve}^2(s)\psi_{\ve}(s)]ds.
\end{aligned}
\end{equation}
We may assume that there exists some constant $C>0$ such that
\begin{equation}
|\phi_{\ve}(r)|\leq Cw\left(\frac{r}{\ve}\right),\qquad |\psi_{\ve}(r)|\leq C,\qquad\text{for all}\;\;r\in [0, R].
\end{equation}
Thus the first term in the right-hand side of \eqref{psi1} has the estimate
\begin{equation}
\left|-\frac{2\phi_{\ve}(r)\nabla_rv_{\ve}(r)}{u_{\ve}(r)}\right|\leq C\left|\nabla_rv_{\ve}(r)\right|=O\left(\frac{1}{D_0(\ve)}\right)r.
\end{equation}
The second term in the right-hand side of \eqref{psi1} can be estimated as in the estimation of $\nabla_rv_{\ve}(r)$:
\begin{equation}
\begin{aligned}
\frac{\ve^N}{D_0(\ve)r^{N-1}u_{\ve}^2(r)}&\int_0^rs^{N-1}[3v_{\ve}(s)u_{\ve}^2(s)\phi_{\ve}(s)+u_{\ve}^3(s)\psi_{\ve}(s)]ds\\
&\leq \frac{C\ve^N}{D_0(\ve)r^{N-1}u_{\ve}^2(r)}\int_0^rs^{N-1}v_{\ve}(s)u_{\ve}^3(s)dx\\
&=O\left(\frac{1}{D_0(\ve)}\right)r.
\end{aligned}
\end{equation}
For the third term in the right-hand side of \eqref{psi1} we have
\begin{equation}
\begin{aligned}
&\left|\frac{\ve^{2N}\tau(\ve)\lambda_{\ve}}{D_0(\ve)r^{N-1}u_{\ve}^2(r)}\int_0^rs^{N-1}[2u_{\ve}(s)v_{\ve}(s)\phi_{\ve}(s)+u_{\ve}^2(s)\psi_{\ve}(s)]ds\right|\\
&\leq C\frac{\ve^{2N}\tau(\ve)|\lambda_{\ve}|}{D_0(\ve)u_{\ve}^2(r)}\int_0^ru_{\ve}^2(s)ds\\
&\leq C \frac{\ve^{2N}\tau(\ve)|\lambda_{\ve}|}{D_0(\ve)u_{\ve}^2(r)}\left(\int_0^{\ve|\log{\ve}|}+\int_{\ve|\log{\ve}|}^r\right)u_{\ve}^2(s)ds\\
&=I_1+I_2.
\end{aligned}
\end{equation}
On the interval $[0, \ve|\log{\ve}|)$ we have $c_1\ve\leq u_{\ve}\leq c_2$ and hence
\[\begin{aligned}
I_1&= C \frac{\ve^{2N}\tau(\ve)|\lambda_{\ve}|}{D_0(\ve)u_{\ve}^2(r)}\int_0^{\ve|\log{\ve}|}u_{\ve}^2 ds\\
&\leq C\frac{\ve^{2N-2}\tau(\ve)|\lambda_{\ve}|}{D_0(\ve)}\int_0^{\ve|\log{\ve}|}ds\\
&=O\left(\frac{1}{D_0(\ve)}\right)\times \ve^{2N-1}|\log{\ve}|\tau(\ve)|\lambda_{\ve}|.
\end{aligned}\]
On the interval $[\ve|\log{\ve}|, r)$ we have $c_1\ve^2\leq u_{\ve}\leq c_2\ve$ and hence
\[\begin{aligned}
I_2&= C \frac{\ve^{2N}\tau(\ve)|\lambda_{\ve}|}{D_0(\ve)u_{\ve}^2(r)}\int_{\ve|\log{\ve}|}^ru_{\ve}^2(s)ds\\
&\leq C\frac{\ve^{2N}\tau(\ve)|\lambda_{\ve}|}{D_0(\ve)\ve^4}\int_{\ve|\log{\ve}|}^r\ve^2ds\\
&=O\left(\frac{1}{D_0(\ve)}\right)\times \ve^{2N-2}\tau(\ve)|\lambda_{\ve}|.
\end{aligned}\]

The above estimate on $\nabla_r\psi$ is very rough. In fact, if we divides the interval $[0, r]$ into finite sufficiently small subintervals and make estimates on these subintervals, we can obtain the following more refined estimate
\begin{equation}\label{dpsi}
\nabla_r\psi_{\ve}(r)=O\left(\frac{1}{D_0(\ve)}\right)\times \ve^{2N-\delta}\tau(\ve)|\lambda_{\ve}|r\quad\text{for any small}\;\;\delta>0.
\end{equation}
As  a consequence of \eqref{dpsi} we obtain
\begin{equation}\label{psi_osc}
\psi_{\ve}(r)-\psi_{\ve}(0)=O\left(\frac{1}{D_0(\ve)}\right)\times \ve^{2N-\delta}\tau(\ve)|\lambda_{\ve}|r^2\quad\text{for all}\;\;r\in [0, R].
\end{equation}
In view of \eqref{tau}, we have
\begin{equation}
\psi_{\ve}(r)-\psi_{\ve}(0)=O\left(\frac{\ve^{N-\delta}}{D_0(\ve)}\right)|\lambda_{\ve}|r^2\quad\text{for all}\;\;r\in [0, R].
\end{equation}
This estimate will enable us to derive an NLEP as a limiting problem of the eigenvalue problem \eqref{lin-eq}. As mentioned before, we can prove that the set $\{\lambda\in \mathbb{C}|\lambda_{\ve}\geq -c\ve^{-N}[\tau(\ve)]^{-1}\}$ is bounded. Hence we can assume, up to a subsequence, $\lambda_{\ve}\to \lambda\in \mathcal{C}$.

We rescale $\phi_{\ve}(x)=\hat{\phi}_{\ve}(y)$ with $x=\ve y$ and assume that
\begin{equation}
\|\hat{\phi}_{\ve}\|_{H^2(B_{R/\ve})}=1.
\end{equation}
By a standard procedure, we can extend the definition of $ \hat{\phi}_{\ve}(y)$ to the whole of $\R^N$, still radial, denoted by $\tilde{\phi}_{\ve}$, with $C^{-1}\leq\|\tilde{\phi}_{\ve}\|_{H^2(\R^N)}\leq C$. So, up to a subsequence, we can  assume  $\tilde{\phi}_{\ve}\to \tilde{\phi}$ in $H^1(\R^N)$, as $\ve\to 0$.

Integrating the second equation of \eqref{lin-eq} over $B_R$, taking the limit $\ve\to 0$, and taking note of the exponential decay of $w$,  we obtain as $\ve\to 0$ that
\begin{equation}\label{phi_psi_int}
\int_{\R^N}[3w^2\tilde{\phi}+v_0^{-3/2}w^3\psi(0)+\ve^{N}\tau(\ve)\lambda_{\ve}(2v_0^{1/2}w\tilde{\phi}+v_0^{-1}w^2\psi(0))]dy=o(1).
\end{equation}
where $v_0$ is defined by \eqref{v0}.

From \eqref{phi_psi_int} we obtain
\[
\begin{aligned}
\psi(0)
\sim&-\left[v_0^{-3/2}\int_{\R^N}w^3+\ve^N\tau(\ve)\lambda_{\ve} v_0^{-1}\int_{\R^N} w^2\right]^{-1}\\&\times\left[3\int_{\R^N}w^2\tilde{\phi}+2\ve^N\tau(\ve)\lambda_{\ve} v_0^{1/2}\int_{\R^N}w\tilde{\phi}\right]\\
&=-\left[\frac{\gamma_0^3|B_R|^3}{\left(\int_{\R^N}w^3\right)^2}+\frac{\gamma_0^2|B_R|^2\ve^N\tau(\ve)\lambda_{\ve} \int_{\R^N} w^2}{\left(\int_{\R^N}w^3\right)^2}\right]^{-1}\\&\times\left[3\int_{\R^N}w^2\tilde{\phi}+2\frac{\ve^N\tau(\ve)\lambda_{\ve}}{\gamma_0|B_R|} \int_{\R^N}w^3\int_{\R^N}w\tilde{\phi}\right].
\end{aligned}
\]

Letting $\ve\to 0$ in the first equation of \eqref{lin-eq} we obtain  the following  NLEP:
\begin{equation}\label{NLEP}
\begin{aligned}
\mathcal{L}\tilde{\phi}:=L_0\tilde{\phi}&
- \frac{3\int_{\R^N}w^2\tilde{\phi}\,dy} {\int_{\R^N}w^3\,dy\left[1+\frac{\ve^N\tau(\ve)\lambda}{\gamma_0|B_R|}\int_{\R^N}w^2\,dy\right]}w^3\\
&-\frac{2\ve^N\tau(\ve)\lambda\int w\tilde{\phi}\,dy}{\gamma_0|B_R|+\ve^N\tau(\ve)\lambda\int_{\R^N}w^2\,dy}w^3=\lambda\tilde{\phi},
\end{aligned}
\end{equation}
where $\tilde{\phi}\in H_r(\R^N)$ and
\begin{equation}
L_0\tilde{\phi}:=\Delta \tilde{\phi}-\tilde{\phi}+3w^2\tilde{\phi}.
\end{equation}
Put
\begin{equation}
\tilde{\tau}=\frac{\ve^N\tau(\ve)}{\gamma_0|B_R|}\int_{\R^N}w^2\,dy.
\end{equation}
Then the NLEP has the form
\begin{equation}\label{NLEP++0}
\mathcal{L}\tilde{\phi}:=L_0\tilde{\phi}
- \frac{3}{1+\tilde{\tau}\lambda}\frac{\int_{\R^N}w^2\tilde{\phi}\,dy} {\int_{\R^N}w^3\,dy}w^3
-\frac{2\tilde{\tau}\lambda}{1+\tilde{\tau}\lambda}\frac{\int_{\R^N} w\tilde{\phi}\,dy}{\int_{\R^N}w^2\,dy}w^3
=\lambda\tilde{\phi},
\end{equation}
where $\tilde{\phi}\in H_r^1(\R^N)$.
In particular, letting $\lambda=0$ in \eqref{NLEP++0}, we deduce that the  nonlocal linear problem
\begin{equation}\label{NL}
L_0\tilde{\phi}-3 w^3\frac{\int_{\R^N}w^2\tilde{\phi}\,dy}{\int_{\R^N}w^3\,dy}=0,\hskip 2cm \tilde{\phi}\in H_r^1(\R^N)
\end{equation}
has a nontrivial solution $\|\tilde{\phi}\|_{H^2(\R^N)}>0$. However, according to \cite{Wei1999, Wei2001}, \eqref{NL} has only the trivial solution $\tilde{\phi}\equiv 0$. The contradiction implies that the solution $(u_{\ve}, v_{\ve})$ we are trying to construct is nondegenerate.

We will give a more detailed discussion of the NLEP \eqref{NLEP++0} in Sections \ref{sec: NLEP1} and \ref{sec: NLEP2}.

\section{The existence of radial spike solutions}\label{sec: existence}

In this section, we prove the existence of  radial spike solutions of \eqref{eq} concentrating at the center of the ball  $B_R$ as $\ve\to 0$.  We divide the proof into two steps. First we construct  radial approximate solutions to \eqref{eq} which concentrate at the center of the ball $B_R$. Then we use the contraction mapping principle to show that there exists  exact spike solutions of \eqref{eq} as a small perturbation of the approximate solutions constructed in the first step.

\subsection{Approximate solutions}
Let $\chi:\R\to [0, 1]$ be a smooth cut-off function such that $\chi(s)=1$ for $|s|<1$ and $\chi(s)=0$ for $|s|>2$. Set $R_0=\frac{1}{3}R$ and
\begin{equation}
\tilde{w}_{\ve}(r)=w\left(\frac{r}{\ve}\right)\chi\left(\frac{r}{R_0}\right).
\end{equation}
It is easy to see that $\tilde{w}_{\ve}$ satisfies
\begin{equation}
 \ve^2\Delta \tilde{w}_{\ve}-\tilde{w}_{\ve}+\tilde{w}_{\ve}^3=e.s.t.
\end{equation}
in $L^2(B_R)$, where e.s.t denotes an exponentially small term.

Set
\begin{equation}
w_{\ve}=\frac{1}{\sqrt{v_{\ve}(0)}}\tilde{w}_{\ve},
\end{equation}
where $v_{\ve}=T[w_{\ve}]$
is defined by
\begin{equation}\label{eq-T}
\left\{\begin{aligned}
&\frac{D_0(\ve)}{\ve^{2N} }\nabla [(\alpha_{0}\ve^N+w_{\ve})^2 \nabla T[w_{\ve}])\\
&\hskip 2cm-\ve^{-N}T[w_{\ve}](\alpha_{0}\ve^N+w_{\ve})^3+\gamma_0=0, \;\;&&x\in B_R,\\ &\partial_{\nu}T[w_{\ve}]=0,\quad &&x\in \partial B_R.
\end{aligned}\right.
\end{equation}
We let $r=\ve \rho$ and find that for all $\rho\geq 0$
\begin{equation*}
\begin{aligned}
&T[w_{\ve}](\ve \rho)-T[w_{\ve}](0)\\
=&\frac{\ve^{2N}}{D_0(\ve)}\int_0^{\ve \rho}\frac{1}{s^{N-1}(\alpha_{0}\ve^N+w_{\ve})^2}\int_0^s\tau^{N-1}[\ve^{-N} v_{\ve}(\tau)(\alpha_{0}\ve^N+w_{\ve})^3-\gamma_0]d\tau ds\\
=&\frac{\ve^N}{D_0(\ve)}\int_0^{\ve \rho}\frac{1}{s^{N-1}(\alpha_{0}\ve^N+w_{\ve})^2}\int_0^s \tau^{N-1} v_{\ve}(\tau)(\alpha_{0}\ve^N+w_{\ve})^3d\tau ds\\
&\quad -\frac{\gamma_0\ve^{2N}}{D_0(\ve)}\int_0^{\ve \rho}\frac{1}{s^{N-1}(\alpha_{0}\ve^N+w_{\ve})^2}\int_0^s\tau^{N-1} d\tau ds.
\end{aligned}
\end{equation*}
Setting
\[
W_\ve(s):=\alpha_{0}\ve^N+[v_{\ve}(0)]^{-1/2}w(s),
\]
and using the inequalities $(a+b)^3\leq 2^{3-1}(a^3+b^3)$  for $a, b\geq 0$, we estimate  that
\[
\begin{aligned}
&\quad\frac{\ve^N}{D_0(\ve)}\int_0^{\ve \rho}\frac{1}{s^{N-1}(\alpha_{0}\ve^N+w_{\ve})^2}\int_0^s \tau^{N-1} v_{\ve}(\tau)(\alpha_{0}\ve^N+w_{\ve})^3d\tau ds\\
&=\frac{v_{\ve}(0)\ve^N}{D_0(\ve)}\int_0^{\ve \rho}\frac{1}{s^{N-1}\left(\alpha_{0}\ve^N+w_{\ve}\right)^2}\int_0^s \tau^{N-1} \left(\alpha_{0}\ve^N+w_{\ve}\right)^3d\tau ds+h.o.t.\\
&=\frac{v_{\ve}(0)\ve^{N+2}}{D_0(\ve)}\int_0^{ \rho}\frac{1}{s^{N-1}W_{\ve}^2(s)}\int_0^s \tau^{N-1} W_{\ve}^3(s)d\tau ds
+h.o.t.\\
&\leq \frac{v_{\ve}(0)\ve^{N+2}}{D_0(\ve)}\int_0^{ \rho}\frac{1}{s^{N-1}W_{\ve}^3(s)}\int_0^s4\alpha_{0}^3\ve^{3N}\tau^{N-1} d\tau ds\\
&+\frac{v_{\ve}(0)\ve^{N+2}}{D_0(\ve)}\int_0^{ \rho}\frac{1}{s^{N-1}W_{\ve}^3(s)}\int_0^s 4[v_\ve(0)]^{-3/2}\tau^{N-1} w^3(\tau)d\tau ds+h.o.t.\\
&\leq \frac{4\alpha_{0}^3v_{\ve}(0)\ve^{4N+2}}{ND_0(\ve)}\int_0^{ \rho}\frac{s}{\left(\alpha_{0}\ve^N+[v_{\ve}(0)]^{-1/2}w\left(s\right)\right)^2}ds\\
&+\frac{v_{\ve}(0)\ve^{N+2}}{D_0(\ve)}\int_0^{ \rho}\frac{1}{\left(\alpha_{0}\ve^N+[v_{\ve}(0)]^{-1/2}w\left(s\right)\right)^2}\int_0^s 4[v_\ve(0)]^{-3/2} w^3(\tau)d\tau ds+h.o.t.\\
&=O(1)\times\left( \frac{\ve^{2N+2}}{D_0(\ve)}\rho^2
+\frac{\ve^{N+2}\rho}{D_0(\ve)}\int_{0}^{\rho}\frac{1}{\left(\ve^N+w\left(s\right)\right)^2}ds \right),
\end{aligned}
\]
and
\[\begin{aligned}
&\frac{\gamma_0\ve^{2N}}{D_0(\ve)}\int_0^{\ve \rho}\frac{1}{s^{N-1}(\alpha_{0}\ve^N+w_{\ve})^2}\int_0^s\tau^{N-1} d\tau ds\\
&=\frac{\gamma_0\ve^{2N}}{ND_0(\ve)}\int_0^{\ve \rho}\frac{s}{(\alpha_{0}\ve^N+w_{\ve})^2}ds\\
&=\frac{\gamma_0\ve^{2N+2}}{ND_0(\ve)}\int_0^{ \rho}\frac{s}{(\alpha_{0}\ve^N+[v_{\ve}(0)]^{-1/2}w(s))^2}ds\\
&=O(1)\times \frac{\ve^{2N+2}\rho}{D_0(\ve)}\int_0^{\rho}\frac{1}{(\ve^N+w(s))^2}ds.
\end{aligned}\]
For the estimate of the integral $\int_0^{\rho}\frac{1}{(\ve^N+w(s))^2}ds$, we have the following three different ways. Using the inequality $(\ve^N+w(s))^2\geq w^2(s)$, we have
\begin{equation}
\int_0^{\rho}\frac{1}{(\ve^N+w(s))^2}ds\leq O(1)\times\int_0^{\rho}s^{N-1}e^{2s}ds=O(1)\times \rho^{N-1} e^{2\rho}.
\end{equation}
Using the inequality $(\ve^N+w(s))^2\geq 2\ve^Nw(s)$, we have
\begin{equation}
\int_0^{\rho}\frac{1}{(\ve^N+w(s))^2}ds\leq O(\ve^{-N})\times\int_0^{\rho}s^{\frac{N-1}{2}}e^{s}ds=O(\ve^{-N})\times \rho^{\frac{N-1}{2}} e^{\rho}.
\end{equation}
Using the inequality $(\ve^N+w(s))^2\geq \ve^{2N}$, we have
\begin{equation}
\int_0^{\rho}\frac{1}{(\ve^N+w(s))^2}ds=O(\ve^{-2N})\times \rho.
\end{equation}

Hence we have the estimate
\begin{equation}
T[w_{\ve}](\ve \rho)-T[w_{\ve}](0)=O\left(\frac{1}{D_0(\ve)}\right)\times\left\{\begin{aligned} &\ve^{N+2}\rho^{N}e^{2\rho},\;\;&&\text{or}\\ &\ve^2\rho^{\frac{N+1}{2}}e^{\rho},\;\;&&\text{or} \\ &\ve^{2-N}\rho^2.\end{aligned}\right.\end{equation}

Therefore we have the following estimates that  for all $\rho\in [0, R/\ve]$:
\begin{equation}\label{T-var+}
\begin{aligned}
&|(T[w_{\ve}](\ve \rho)-T[w_{\ve}](0))|w_{\ve}^3\leq C\ve^{N+2} \rho^{\frac{3-N}{2}} e^{-\rho},\\
&|(T[w_{\ve}](\ve \rho)-T[w_{\ve}](0))|\ve^N w_{\ve}^2\leq C\ve^{N+2}\rho^{\frac{3-N}{2}}e^{-\rho},\\
&|(T[w_{\ve}](\ve \rho)-T[w_{\ve}](0))|\ve^{2N} w_{\ve}\leq C\ve^{N+2} \rho^{\frac{5-N}{2}}e^{-\rho}.
\end{aligned}
\end{equation}

Now if we define the norm
\begin{equation}
\|f\|_{**}=\|f\|_{L^2(B_{R/\ve})}+\sup_{0<\rho<R/\ve}[\max(e^{-\frac{1}{2}\rho}, \sqrt{\ve})]^{-1}|f(\rho)|,
\end{equation}
they by the decay of $w_{\ve}$ and the definition of the norm, we infer that
\begin{equation}\label{E3}
\|(T[w_{\ve}](\ve \rho)-T[w_{\ve}](0))w_{\ve}^3\|_{**}=O(\ve^{N+3/2}),
\end{equation}
\begin{equation}
\|(T[w_{\ve}](\ve \rho)-T[w_{\ve}](0))\ve^N w_{\ve}^2\|_{**}=O(\ve^{N+3/2}),
\end{equation}
\begin{equation}
\|(T[w_{\ve}](\ve \rho)-T[w_{\ve}](0))\ve^{2N}w_{\ve}\|_{**}=O(\ve^{N+3/2}).
\end{equation}

Let us now define
\begin{equation}
S_{\ve}[w_{\ve}]:=\ve^2\Delta w_{\ve}-w_{\ve}+T[w_{\ve}](\alpha_{0}\ve^N+w_{\ve})^3,
\end{equation}
where $T[w_{\ve}]$ is defined in \eqref{eq-v}. Then
\begin{equation}
\begin{aligned}
S_{\ve}[w_{\ve}]&=\ve^2\Delta w_{\ve}-w_{\ve}+T[w_{\ve}](\alpha_{0}\ve^N+w_{\ve})^3\\
&=\ve^2\Delta w_{\ve}-w_{\ve}+T[w_{\ve}](0)w^3_{\ve}\\
&+T[w_{\ve}](\alpha_{0}^3\ve^{3N}+3\alpha_{0}^2\ve^{2N}w_{\ve}+3\alpha_{0} \ve^Nw^2_{\ve})\\
&+(T[w_{\ve}](\ve \rho)-T[w_{\ve}](0))w_{\ve}^3\\
&=:E_1+E_2+E_3.
\end{aligned}
\end{equation}
We have
\[
E_1=\frac{1}{v_{\ve}(0)}(\Delta_y \tilde{w}_{\ve}-\tilde{w}_{\ve}+\tilde{w}_{\ve}^3)=e.s.t.,
\]
and
\[
E_2=O(\ve^N)\quad\text{in}\;\;L^2(B_{R/\ve}),
\]
since $T[w_{\ve}]$ is bounded in $L^\infty(B_{R/\ve})$ and $w_{\ve}$ is bounded in $L^2(B_{R/\ve})$.
\[
E_3=[v_{\ve}(0)]^{-3/2}(T[w_{\ve}](\ve \rho)-T[w_{\ve}](0))\tilde{w}_{\ve}^3=O(\ve^{N+2})
\]
in $L^2(B_{R/\ve})$ by the first inequality of \eqref{T-var+}.

Combining these estimates we conclude that
\begin{equation}\label{S}
\|S_{\ve}[w_{\ve}]\|_{**}=O(\ve^{N}).
\end{equation}

The estimate \eqref{S} shows that our choice of approximate solutions is suitable. This will enable us  to rigorously construct a steady state which is very close to the approximate solution.
\subsection{The existence of exact solutions}

In this subsection, we use the contraction mapping principle to prove the existence of a spike solution close to the approximate solution. To this end, we need to study the linearized operator
\[
L_{\ve}:H_r^2(B_{R/\ve})\to L_r^2(B_{R/\ve})
\]
given by
\begin{equation*}
L_{\ve}\phi:=S'_{\ve}[w_{\ve}]\phi=\Delta \phi-\phi+3T[w_{\ve}](\alpha_{0}\ve^N+w_{\ve})^2\phi+(\alpha_{0}\ve^N+w_{\ve})^3T'[w_{\ve}]\phi,
\end{equation*}
where for a given function $\phi\in H_r^2(B_{R/\ve})$ we define $T'[w_{\ve}]\phi$ to be the unique solution of
\begin{equation*}
\left\{
\begin{aligned}
&\nabla_y\left[(\alpha_{0}\ve^N+w_{\ve})^2(T'[w_{\ve}]\nabla_y\phi)\right]\\
& -\frac{\ve^{N+2}}{D_0(\ve)}\left(3T[w_{\ve}](\alpha_{0}\ve^N+w_{\ve})^2\phi+(\alpha_{0}\ve^N+w_{\ve})^3T'[w_{\ve}]\phi\right)=0,\;\;\text{in}\;\;B_{R/\ve},\\
&\partial_r(T'[w_{\ve}])=0,\quad\text{on}\;\;\partial B_{R/\ve}.
\end{aligned}
\right.
\end{equation*}

The norm of the error function $\phi$ is defined as
\begin{equation}
\|\phi\|_{*}=\|\phi\|_{H^2(B_{R/\ve})}+\sup_{\rho\in [0, R/\ve]}[\max(e^{-\rho/2}, \sqrt{\ve})]^{-1}|\phi(\rho)|.
\end{equation}

We recall the nonlocal linear problem \eqref{NL}:
\begin{equation}\label{NL+}
\mathcal{L}\phi=\Delta \phi-\phi+3w^2\phi-3w^3\frac{\int_{\R^N}w^2\phi\,dy}{\int_{\R^N}w^3\,dy}=0,\qquad \phi\in H_r^2(\R^N).
\end{equation}
By \cite{Wei1999} we know that
\[
\mathcal{L}:H^2_r(\R^N)\to L^2_r(\R^N)
\]
 is invertible and its inverse is bounded.

We will show that $L_{\ve}$ is a small perturbation of $\mathcal{L}$  in that $L_{\ve}$ is also invertible with a uniformly bounded inverse for sufficiently small $\ve>0$. This statement is contained in the following proposition.
\begin{prop}\label{L-invertible}
There exist positive constants $\ve_1$ and $\delta_1$ such that for all $\ve\in (0, \ve_1)$, there holds
\begin{equation}\label{coercive}
\|L_{\ve}\phi\|_{**}\geq \delta_1\|\phi\|_{*}.
\end{equation}
Moreover, the map
 \[
L_{\ve}:H_r^2(B_{R/\ve})\to L_r^2(B_{R/\ve})
\]
is surjective.
\end{prop}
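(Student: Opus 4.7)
The plan is to argue by contradiction, following the standard reduction scheme for spike problems: \emph{a priori} estimate $\Rightarrow$ pass to a limit $\Rightarrow$ rule out the limit via the nondegeneracy of the NLEP $\mathcal{L}$ already established at the end of Section \ref{sec: NLEP}. Suppose \eqref{coercive} fails. Then there are sequences $\ve_n\to 0$ and $\phi_n\in H^2_r(B_{R/\ve_n})$ with $\|\phi_n\|_*=1$ and $\|L_{\ve_n}\phi_n\|_{**}\to 0$. First, I extend $\phi_n$ by a standard cut-off/reflection to a radial function $\tilde\phi_n\in H^2(\R^N)$ with $C^{-1}\le \|\tilde\phi_n\|_{H^2(\R^N)}\le C$, so that along a subsequence $\tilde\phi_n\rightharpoonup \tilde\phi$ in $H^2_{\mathrm{loc}}(\R^N)$ and $\tilde\phi_n\to\tilde\phi$ in $H^1_{\mathrm{loc}}$.

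The key step is to identify what equation the limit $\tilde\phi$ satisfies. For this I must analyze $\psi_n:=T'[w_{\ve_n}]\phi_n$, the linearized $v$-correction, using exactly the same subinterval decomposition employed in the proof of Lemma \ref{lem-est-v} and in the derivation of \eqref{dpsi}: writing $\psi_n$ by integration of its defining equation against $r^{N-1}(\alpha_0\ve^N+w_\ve)^2$, one obtains a pointwise bound $|\nabla_r\psi_n(r)|=O(1/D_0(\ve_n))\cdot\ve_n^{N-\delta}r$, hence $\psi_n(r)-\psi_n(0)=o(1)$ uniformly on $[0,R]$. Integrating the defining equation of $T'[w_{\ve_n}]\phi_n$ over $B_R$, using the exponential decay of $w$ (so all terms localize near the spike), and passing to the limit as in the derivation of \eqref{phi_psi_int}, I identify $\psi_n(0)\to \psi_0$ with
\[
\psi_0 = -\,\frac{3\int_{\R^N}w^2\tilde\phi\,dy}{v_0^{-3/2}\int_{\R^N}w^3\,dy}.
\]
(Here the time-derivative contribution in \eqref{NLEP++0} is absent because we are now at $\lambda=0$.) Inserting this into the rescaled first equation and letting $\ve_n\to 0$ yields, in a pointwise and $H^1_{\mathrm{loc}}$ sense,
\[
\Delta\tilde\phi-\tilde\phi+3w^2\tilde\phi-3w^3\frac{\int_{\R^N}w^2\tilde\phi\,dy}{\int_{\R^N}w^3\,dy}=0,
\]
which is precisely the nonlocal linear problem \eqref{NL+}. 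By the result from \cite{Wei1999} recalled in the excerpt, this forces $\tilde\phi\equiv 0$.

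The next step is to upgrade $\tilde\phi=0$ to a contradiction with $\|\phi_n\|_*=1$. The $H^2$ part is handled by local compactness together with the strict exponential decay away from the origin: standard barrier/comparison arguments applied to $L_{\ve_n}\phi_n=o(1)$ in $\{\rho\ge R_1\}$ (where $3v_\ve u_\ve^2$ is small) give $|\phi_n(\rho)|\le C(e^{-\rho/2}+\sqrt{\ve_n})\,o(1)$ in that range, so $\phi_n\to 0$ in $L^\infty$-with-weight on the far region. On the inner region $\{\rho\le R_1\}$, $H^2_{\mathrm{loc}}\hookrightarrow C^0$ compactness together with $\tilde\phi\equiv 0$ gives $\phi_n\to 0$ in $L^\infty(\{\rho\le R_1\})$. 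Combining the two yields the sup-part of $\|\phi_n\|_*\to 0$. Bootstrapping via $\Delta\phi_n=\phi_n-3T[w_\ve](\alpha_0\ve^N+w_\ve)^2\phi_n-(\alpha_0\ve^N+w_\ve)^3\psi_n+o(1)$ then forces $\|\phi_n\|_{H^2(B_{R/\ve_n})}\to 0$, contradicting $\|\phi_n\|_*=1$. This establishes \eqref{coercive}.

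Finally, surjectivity follows from \eqref{coercive} by a soft argument: $L_\ve$ is a compact perturbation of $\Delta-1$ on $H^2_r(B_{R/\ve})$, hence Fredholm of index $0$; \eqref{coercive} gives trivial kernel, so the index formula gives surjectivity. The main obstacle I expect is the second step: controlling $\psi_n=T'[w_{\ve_n}]\phi_n$ uniformly so that $\psi_n(0)$ has a clean limit and $\psi_n$ is almost constant on $B_R$. This requires the careful six-subinterval estimate of Lemma \ref{lem-est-v}, now driven by $\phi_n$ rather than by the source $\gamma_0$; the nontrivial point is that $\phi_n$ has no universal $L^\infty$ bound independent of the spike profile, so the bound $|\phi_n(r)|\le C w(r/\ve_n)+\sqrt{\ve_n}$ inherited from $\|\phi_n\|_*\le 1$ must be combined with the sub-interval control of $u_\ve^{-2}$ exactly as in Section \ref{sec:redToSin}.
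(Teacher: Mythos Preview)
Your proof is correct and follows essentially the same contradiction/compactness scheme as the paper; you supply more detail on the analysis of $\psi_n=T'[w_{\ve_n}]\phi_n$ and on the far-field decay, which the paper compresses into the phrases ``$\tilde\phi$ solves $\mathcal{L}\tilde\phi=0$'' and ``the maximum principle then implies $\|\phi_k\|_*\to 0$.'' For surjectivity you invoke Fredholm index~$0$ together with the injectivity already given by \eqref{coercive}, whereas the paper instead checks injectivity of the adjoint $L_\ve^*$ (reducing it to the same nondegeneracy of \eqref{NL}); both routes are valid and yours is slightly more direct.
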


\begin{proof}
Suppose that \eqref{coercive} is false. Then there exist sequences $\{\ve_k\}$ and $\{\phi_k\}$ with $\ve_k\to 0$ and $\phi_k=\phi_{\ve_k}$ such that
\begin{equation}
\|\phi_k\|_*=1\;\;\text{for}\;\; k=1, 2, \cdots,
\end{equation}
and
\begin{equation}
\|L_{\ve_k}\phi_k\|_{**}\to 0\;\;\text{as}\;\;k\to \infty.
\end{equation}
We define
\begin{equation}
\tilde{\phi}_{\ve}(y)=\phi(y)\chi\left(\frac{\ve |y|}{R_0}\right)
\end{equation}
with $R_0=R/3$.

By a standard procedure $\tilde{\phi}_{k}$ can be extended to be defined on $\R^N$ such that their norm in $H^2(\R^N)$ is still bounded by  a constant independent of $\ve$ for $\ve$ small enough. In the following we will deal with this extension.  Since $\{\tilde{\phi}_k\}$ is bounded in $H^2_{loc}(\R^N)$ it converges weakly to a limit $\tilde{\phi}$ in $H^2_{loc}(\R^N)$, and also strongly in $L^2_{loc}(\R^N)$ and $L^\infty_{loc}(\R^N)$. Then $\tilde{\phi}$ solves the equation $\mathcal{L}\tilde{\phi}=0$, which implies that $\tilde{\phi}=0$. By elliptic regularity we have $\|\tilde{\phi}_k\|_{H^2(\R^N)}\to 0$ as $k\to \infty$, which implies that $\|\phi_k\|_{H^2(B_{R/\ve})}\to 0$ as $k\to\infty$. The maximum principle then implies that $\|\phi_k\|_{*}\to 0$ as $k\to\infty$. This contradicts the assumption that $\|\phi_k\|_*=1$.

To complete the proof of the proposition, we need to show that the conjugate operator of $L_{\ve}$ (denoted by $L_{\ve}^*$) is injective from $H^2(B_{R/\ve})$ to $L^2(B_{R/\ve})$. The injectivity of $L^*_{\ve}$ is essentially the nondegeneracy condition we discussed in the end of Section \ref{sec: NLEP} and therefore omitted here.
\end{proof}

Now we are in a position to solve the equation
\begin{equation}
S_{\ve}[w_{\ve}+\phi]=0.
\end{equation}
Since $L_{\ve}$ is invertible (with its inverse $L^{-1}_{\ve}$), we can rewrite this equation as
\begin{equation}\label{FixedPoint}
\phi=-(L_{\ve}^{-1}\circ S_{\ve}[w_{\ve}])-(L^{-1}_{\ve}\circ N_{\ve}[\phi])\equiv M_{\ve}[\phi],
\end{equation}
where
\begin{equation}
N_{\ve}[\phi]=S_{\ve}[w_{\ve}+\phi]-S_{\ve}[w_{\ve}]-S'_{\ve}[w_\ve]\phi,
\end{equation}
and the operator $M_{\ve}$ is defined for $\phi\in H^2(B_{R/\ve})$. We will show that the operator $M_{\ve}$ is a contraction on
\[
\mathcal{B}_{\ve, \delta}\equiv \{\phi\in H^2(B_{R/\ve}):\|\phi\|_*<\delta\}
\]
if $\ve$ is small enough and $\delta$ is suitably chosen.

By \eqref{S} and Proposition \ref{L-invertible} we have that
\[
\begin{aligned}
\|M_{\ve}[\phi]\|_{*}\leq \delta_1^{-1}\left(\|S_{\ve}[w_{\ve}]\|_{**}+\|N_{\ve}[\phi]\|_{**}\right)\leq \delta_1^{-1}C_0(\ve^N+c(\delta)\delta),
\end{aligned}
\]
where $\delta_1>0$ is independent of $\delta>0, \ve>0$, and $c(\delta)\to 0$ as $\delta\to 0$. Similarly we can show that
\[
\begin{aligned}
\|M_{\ve}[\phi]-M_{\ve}[\tilde{\phi}]\|_{*}\leq \delta_1^{-1}C_0(\ve^N+c(\delta)\delta)\|\phi-\tilde{\phi}\|_*,
\end{aligned}
\]
where $c(\delta)\to 0$ as $\delta\to 0$. Choosing $\delta=C_1\ve^N$ for $\delta_1^{-1}C_0<C_1$ and taking $\ve$ small enough, then $M_{\ve}$ maps $\mathcal{B}_{\ve, \delta}$ into $\mathcal{B}_{\ve, \delta}$, so that it is a contraction mapping in $\mathcal{B}_{\ve, \delta}$. The existence of a fixed point $\phi_{\ve}$ now follows from the standard contraction mapping principle and $\phi_{\ve}$ is a solution of \eqref{FixedPoint}.

We have thus proved the following.
\begin{thm}
There exists $\ve_0>0, C_1>0$ such that for every $\ve\in (0, \ve_0)$, there is a unique $\phi_{\ve}\in H^2(B_{R/\ve})$ satisfying
\[
S[w_{\ve}+\phi_{\ve}]=0\quad\text{with}\;\;\|\phi_{\ve}\|_*\leq C_1\ve^{N}.
\]
\end{thm}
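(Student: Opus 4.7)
The plan is to solve the steady-state equation $S_\ve[w_\ve+\phi]=0$ by rewriting it as a fixed point problem and applying the Banach contraction mapping theorem in the ball $\mathcal{B}_{\ve,\delta}=\{\phi\in H_r^2(B_{R/\ve}):\|\phi\|_*<\delta\}$ for a suitable radius $\delta=C_1\ve^N$. Expanding $S_\ve[w_\ve+\phi]=S_\ve[w_\ve]+L_\ve\phi+N_\ve[\phi]$ and using that $L_\ve$ is invertible (Proposition \ref{L-invertible}), the equation is equivalent to the fixed-point formulation $\phi=M_\ve[\phi]$ as in \eqref{FixedPoint}, where $N_\ve[\phi]=S_\ve[w_\ve+\phi]-S_\ve[w_\ve]-L_\ve\phi$ collects all terms of quadratic or higher order in $\phi$.

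First I would verify the self-map property. From Proposition \ref{L-invertible} one has $\|L_\ve^{-1}f\|_*\leq \delta_1^{-1}\|f\|_{**}$ for all $f$ in the range. Combined with the estimate $\|S_\ve[w_\ve]\|_{**}=O(\ve^N)$ established in \eqref{S}, this yields
\begin{equation*}
\|M_\ve[\phi]\|_*\leq \delta_1^{-1}\bigl(\|S_\ve[w_\ve]\|_{**}+\|N_\ve[\phi]\|_{**}\bigr)\leq \delta_1^{-1}C_0\bigl(\ve^N+c(\delta)\delta\bigr),
\end{equation*}
where $c(\delta)\to 0$ as $\delta\to 0$. Choosing $\delta=C_1\ve^N$ with $C_1>\delta_1^{-1}C_0$, and then taking $\ve$ small enough so that $\delta_1^{-1}C_0 c(\delta)<1/2$, ensures $\|M_\ve[\phi]\|_*\leq \delta$ whenever $\phi\in \mathcal{B}_{\ve,\delta}$.

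Next I would establish the contraction estimate. The nonlinearity $N_\ve[\phi]-N_\ve[\tilde\phi]$ involves differences of the cubic term $T[w_\ve+\phi](\alpha_0\ve^N+w_\ve+\phi)^3$ and its linearization, together with analogous differences for the nonlocal operator $T$. Using that both $T[\cdot]$ and $T'[\cdot]$ depend smoothly on their arguments and using the weighted pointwise bound built into $\|\cdot\|_*$ to control the cubic growth, one obtains
\begin{equation*}
\|M_\ve[\phi]-M_\ve[\tilde\phi]\|_*\leq \delta_1^{-1}C_0\, c(\delta)\,\|\phi-\tilde\phi\|_*,
\end{equation*}
which is a contraction for $\ve$ (hence $\delta$) small. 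The Banach fixed-point theorem then produces a unique $\phi_\ve\in \mathcal{B}_{\ve,\delta}$ with $M_\ve[\phi_\ve]=\phi_\ve$, which by construction satisfies $S_\ve[w_\ve+\phi_\ve]=0$ and $\|\phi_\ve\|_*\leq C_1\ve^N$.

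The main obstacle is controlling the nonlocal term $T'[w_\ve]\phi$ inside $L_\ve$ and inside $N_\ve[\phi]$ in the $\|\cdot\|_{**}$ norm: unlike a local nonlinearity, one must verify that differences $T[w_\ve+\phi]-T[w_\ve]-T'[w_\ve]\phi$ remain of order $o(\|\phi\|_*)$ uniformly in $\ve$, despite the large factor $D_0(\ve)/\ve^{2N}$ and the degeneracy of the coefficient $(\alpha_0\ve^N+w_\ve)^2$ in the defining equation for $T$. This is handled by repeating the layered estimates used in the proof of Lemma \ref{lem-est-v} (splitting $[0,R/\ve]$ into the core, transition, and outer regions where $w_\ve$ crosses the scales $1,\ve^{1/4},\dots,\ve^2$), which show that the pointwise variation of $T[w_\ve+\phi]-T[w_\ve]$ and its derivative is of order $O(1/D_0(\ve))$ times a polynomial weight, and then combining with the weighted sup bound in the definition of $\|\cdot\|_*$ to absorb the cubic growth of $(\alpha_0\ve^N+w_\ve+\phi)^3$. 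Once this is in hand, the rest of the argument is a textbook application of the contraction principle.
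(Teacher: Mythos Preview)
Your proposal is correct and follows essentially the same route as the paper: rewrite $S_\ve[w_\ve+\phi]=0$ as the fixed-point problem $\phi=M_\ve[\phi]$ via the invertibility of $L_\ve$ (Proposition~\ref{L-invertible}), combine the error bound \eqref{S} with a quadratic remainder estimate $\|N_\ve[\phi]\|_{**}\le c(\delta)\delta$ to get self-mapping and contraction on $\mathcal{B}_{\ve,\delta}$ with $\delta=C_1\ve^N$, and conclude by Banach's theorem. Your discussion of the nonlocal remainder $T[w_\ve+\phi]-T[w_\ve]-T'[w_\ve]\phi$ via the layered estimates of Lemma~\ref{lem-est-v} in fact supplies justification that the paper leaves implicit.
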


\section{The stability of the spike solutions in the two-dimensional case}\label{sec: NLEP1}

In this section, we consider the linear stability of the spike solution we obtained from the precious sections.  For $\ve$ small enough, it is sufficient to study the spectrum of the  NLEP: \begin{equation}\label{NLEP++}
\begin{aligned}
\mathcal{L}\phi:=L_0\phi&
- \frac{3}{1+\tilde{\tau}\lambda}\frac{\int_{\R^N}w^2\phi\,dy} {\int_{\R^N}w^3\,dy}w^3
-\frac{2\tilde{\tau}\lambda}{1+\tilde{\tau}\lambda}\frac{\int_{\R^N} w\phi\,dy}{\int_{\R^N}w^2\,dy}w^3\\
&=\lambda\phi,\hskip 2cm \phi\in H_r^1(\R^2),
\end{aligned}
\end{equation}
where
\begin{equation}
L_0\phi:=\Delta \phi-\phi+3w^2\phi,
\end{equation}
and
\begin{equation}
\tilde{\tau}=\frac{\ve^2\tau(\ve)}{\gamma_0|B_R|}\int_{\R^2}w^2\,dy.
\end{equation}

We begin our discussion of the NLEP by citing a result from \cite{Wei2001}.

\begin{lem}[Theorem 1 of \cite{Wei2001}]\label{lem:tau=0}
Suppose $\gamma>2$. There exists a constant $c_0>0$ such that all the eigenvalues of the eigenvalue problem
\begin{equation}
\begin{aligned}
&L_0\phi-\gamma\frac{\int_{\R^2}w^2\phi}{\int_{\R^2} w^3}w^3=\lambda\phi,\\
&\lambda\in \mathbb{C}, \qquad \phi\in H^1(\R^2),
\end{aligned}
\end{equation}
satisfies $Re(\lambda)\leq -c_0$.
\end{lem}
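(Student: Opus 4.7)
The plan is to convert this complex eigenvalue problem into a real quadratic-form coercivity statement, then exploit the condition $\gamma>2$ together with the spectral structure of $L_0$ on radial functions. First I would pair the NLEP with $\bar\phi$ and integrate over $\R^2$; after taking the real part this yields
\begin{equation*}
\mathrm{Re}(\lambda)\int_{\R^2}|\phi|^2\,dy = Q(\phi) - \gamma\,\mathrm{Re}\!\left(\frac{\bigl(\int w^2\phi\bigr)\overline{\bigl(\int w^3\phi\bigr)}}{\int w^3}\right),
\end{equation*}
where $Q(\phi):=\int \bar\phi\,L_0\phi\,dy = -\int|\nabla\phi|^2-\int|\phi|^2+3\int w^2|\phi|^2$. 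The task therefore reduces to showing that the right-hand side is bounded above by $-c_0\int|\phi|^2$ for some universal $c_0>0$, which is a purely real coercivity question.

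The algebraic step uses the identity $L_0 w=2w^3$ recalled in Section~\ref{sec:redToSin}. I would decompose $\phi=cw+\phi^{\perp}$ with $c=\int w^2\phi/\int w^3$, so that $\int w^2\phi^{\perp}=0$. Self-adjointness of $L_0$ combined with $L_0 w=2w^3$ gives $Q(w)=2\int w^4$ and $\int \bar\phi^{\perp}L_0 w = 2\int w^3\bar\phi^{\perp}$, and a direct expansion then produces
\begin{equation*}
Q(\phi) - \gamma\,\mathrm{Re}\!\left(\frac{(\int w^2\phi)\overline{(\int w^3\phi)}}{\int w^3}\right) = (2-\gamma)|c|^2\!\int w^4 + (4-\gamma)\,\mathrm{Re}\!\left(\bar c\!\int w^3\phi^{\perp}\right) + Q(\phi^{\perp}).
\end{equation*}
Since $\gamma>2$, the $|c|^2$ coefficient is strictly negative. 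The cross term is controlled via a weighted Cauchy--Schwarz inequality, absorbing a small multiple into the $|c|^2$ term and a complementary piece into the constrained coercivity estimate for $Q(\phi^{\perp})$ described next.

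The crux is proving the constrained coercivity $Q(\phi^{\perp})\leq -c_0'\|\phi^{\perp}\|_{L^2}^2$ on the subspace $\mathcal{H}:=\{\phi\in H^1_r(\R^2):\int w^2\phi\,dy=0\}$. By Lemma~\ref{lem:L0}, on radial functions $L_0$ has a single positive eigenvalue $\mu_0$ with positive radial eigenfunction $\phi_0$, while all other radial eigenvalues are strictly negative. Since $\phi_0>0$ and $w>0$, the pairing $\int w^2\phi_0$ is strictly positive, so the linear constraint defining $\mathcal{H}$ cuts transversally across the unique unstable direction of $L_0$. A min--max argument combined with Lagrange-multiplier analysis of the extremizer, which satisfies $L_0\phi^{*}=\mu^{*}\phi^{*}+\beta w^2$, then delivers a strictly negative upper bound for the Rayleigh quotient $Q/\|\cdot\|_{L^2}^2$ on $\mathcal{H}$. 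I expect this spectral step---upgrading strict negativity to a quantitative uniform gap $c_0'>0$---to be the main technical obstacle, with compactness from the exponential decay of $w$ ensuring that the supremum is attained and hence strictly negative. Assembling these pieces and taking $c_0>0$ small enough relative to $(\gamma-2)$ and $c_0'$ completes the proof.
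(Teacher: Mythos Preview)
The paper does not prove this lemma; it simply cites Theorem~1 of \cite{Wei2001}. So there is no in-paper argument to compare against, and the question is whether your outline stands on its own.

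There are two genuine gaps. First, the constrained coercivity $Q(\phi^{\perp})\le -c_0'\|\phi^{\perp}\|_{L^2}^2$ on $\mathcal{H}=\{\int w^2\phi=0\}$ is not established by the reasoning you give. Transversality of the constraint to the top eigenfunction $\phi_0$ only guarantees that the constrained Rayleigh quotient drops strictly below $\mu_0$; it does \emph{not} force it below zero. In a two-mode model with eigenvalues $\mu_0\gg 0>\mu_1$ and constraint vector $v=\phi_0+M\phi_1$, the constrained supremum stays positive for $M$ large. Showing the specific constraint $\int w^2\phi=0$ pushes the spectrum below zero requires computing (or bounding) the sign of $\int w^2\,L_0^{-1}(w^2)$, or an equivalent piece of problem-specific information, which your outline does not supply.

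Second, the cross-term absorption does not close. After your decomposition the expression is
\[
(2-\gamma)|c|^2\!\int w^4 \;+\; (4-\gamma)\,\mathrm{Re}\Bigl(\bar c\!\int w^3\phi^{\perp}\Bigr) \;+\; Q(\phi^{\perp}).
\]
For $\gamma$ just above $2$ the leading coefficient is $-(\gamma-2)\int w^4$, which is arbitrarily small, while the cross-term coefficient $(4-\gamma)\approx 2$ is order one. Any weighted Cauchy--Schwarz splitting $|\bar c\,J|\le \delta|c|^2+C_\delta|J|^2$ forces $C_\delta\sim\delta^{-1}$, and since $c_0'$ is a fixed constant you cannot absorb $C_\delta|J|^2$ into $Q(\phi^{\perp})\le -c_0'\|\phi^{\perp}\|^2$. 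Completing the square gives the same obstruction.

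The proofs in \cite{Wei1999,Wei2001}, and the analogous NLEP arguments in Sections~\ref{sec: NLEP1}--\ref{sec: NLEP2} of this paper, avoid this by not relying solely on the $\bar\phi$-pairing: they also multiply the eigenvalue equation by $w$ (and sometimes $w_0$) to obtain algebraic relations among $\int w\phi$, $\int w^2\phi$, $\int w^3\phi$, and then feed these back into the quadratic form. That extra equation is precisely what eliminates the cross term. Your plan is missing this ingredient. A smaller issue: you restrict the coercivity step to $H^1_r(\R^2)$ while the stated lemma is on $H^1(\R^2)$; the translation modes $\partial_{y_j}w$ give $\lambda=0$, so either the restriction to radial $\phi$ must be justified or the kernel must be handled separately.
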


Consider the following nonlocal eigenvalue problem
\begin{equation}
\label{s1}
\left\{\begin{aligned}
&L_0\phi -\frac{3}{1+\tilde{\tau} \lambda} \frac{\int_{\R^2} w^2 \phi}{\int_{\R^2} w^3} w^3-\frac{2\tilde{\tau} \lambda}{1+\tilde{\tau} \lambda} \frac{\int_{\R^2} w \phi}{ \int_{\R^2} w^2} w^3 =\lambda \phi,\\
&\lambda\in\mathbb{C},\qquad\phi\in H^1(\R^2),
\end{aligned}\right.
\end{equation}
where
\[
L_0\phi=\Delta\phi-\phi+3w^2\phi.
\]

If $\tilde{\tau}=0$, by the remark of Theorem 1 of \cite{Wei2001}, any eigenvalue must satisfy $ Re (\lambda)<0$.  Hence for $\tilde{\tau} $ small problem (\ref{s1}) is stable.

For large $\tilde{\tau}$ we have the following result.

\begin{thm} Let $(\tilde{\tau}, \lambda)$ be a pair satisfying \eqref{s1} with a nontrivial eigenfunction $\phi$. Then
\begin{itemize}
\item [(i)]
There exists $\tilde{\tau}_0>0$ such that for $\tilde{\tau} >\tilde{\tau}_0$, any eigenvalue of (\ref{s1}) with $ Re (\lambda)\geq 0$ must be of the order $ c_0^{1/3} \tilde{\tau}^{-1/3} e^{ \pm \frac{\pi}{3} i}$ with $c_0= \frac{\int_{\R^2} w^2}{2\int_{\R^2} (w_0)^2}>0$. Conversely for $\tilde{\tau}$ large there exist a pair of eigenvalues on the right half plane with $\lambda \sim c_0^{1/3} \tilde{\tau}^{-1/3} e^{ \pm \frac{\pi}{3} i}$.

\item  [(ii)] There exists a Hopf bifurcation at some $\tilde{\tau}_h>0$.
\end{itemize}
\end{thm}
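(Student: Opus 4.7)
My plan is to reduce the NLEP to a scalar characteristic equation, extract its root asymptotics for large $\tilde\tau$, and then conclude the Hopf crossing by continuity in $\tilde\tau$. For $\lambda$ outside the (discrete) radial spectrum of $L_0$, the equation \eqref{s1} can be inverted to $\phi=\sigma(\lambda)(L_0-\lambda)^{-1}w^3$ with $\sigma$ a scalar depending on the two nonlocal integrals; testing against $w$ and $w^2$ and eliminating $\sigma$ leads to the dispersion relation
\[
F(\lambda,\tilde\tau):=1+\tilde\tau\lambda-\frac{3}{\int_{\R^2} w^3}\int_{\R^2} w^2(L_0-\lambda)^{-1}w^3-\frac{2\tilde\tau\lambda}{\int_{\R^2} w^2}\int_{\R^2} w(L_0-\lambda)^{-1}w^3=0,
\]
whose zeros (outside the radial spectrum of $L_0$) are the eigenvalues of \eqref{s1}.

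The next step is to expand $F$ for small $\lambda$ using the structural identities $L_0^{-1}w^3=\tfrac12 w$ and $L_0^{-1}w=w_0$, which give $(L_0-\lambda)^{-1}w^3=\tfrac12 w+\tfrac{\lambda}{2}w_0+\tfrac{\lambda^2}{2}L_0^{-1}w_0+O(\lambda^3)$. Combined with the two-dimensional identities $\int ww_0=(\tfrac12-\tfrac{N}{4})\int w^2=0$, $\int w^2w_0=\tfrac{3-N}{6}\int w^3=\tfrac16\int w^3$ and $\int wL_0^{-1}w_0=\int w_0^2$ (the last by self-adjointness of $L_0$), this yields
\[
F(\lambda,\tilde\tau)=-\tfrac12-\tfrac{\lambda}{4}-\tilde\tau\lambda^3\,\frac{\int w_0^2}{\int w^2}+O(\lambda^2)+O(\tilde\tau\lambda^4).
\]
The cancellation of the would-be $O(\tilde\tau\lambda)$ contribution comes from $L_0^{-1}w^3=\tfrac12 w$; the vanishing of the would-be $O(\tilde\tau\lambda^2)$ contribution is the precise two-dimensional effect $\int ww_0=0$. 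Together they force the dominant balance to be cubic in $\lambda$. Rescaling $\lambda=\tilde\tau^{-1/3}\mu$ produces the limit equation $-\tfrac12-\mu^3\frac{\int w_0^2}{\int w^2}=0$, whose three roots are the cube roots of $-c_0$ with $c_0=\frac{\int w^2}{2\int w_0^2}$, namely $\mu=c_0^{1/3}e^{\pm i\pi/3}$ and $\mu=-c_0^{1/3}$; only the first two lie in the right half plane. A Rouché argument on $F(\tilde\tau^{-1/3}\mu,\tilde\tau)$ applied to small circles around each simple limit root perturbs them to actual zeros of $F$, yielding the claimed asymptotic $\lambda\sim c_0^{1/3}\tilde\tau^{-1/3}e^{\pm i\pi/3}$ and its negative real counterpart.

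To show that \emph{every} eigenvalue with $Re(\lambda)\geq 0$ is of this order for large $\tilde\tau$, I would rule out that a sequence of unstable eigenvalues $\lambda_{\tilde\tau}$ stays bounded away from $0$. If $|\lambda_{\tilde\tau}|\geq c>0$ and $Re(\lambda_{\tilde\tau})\geq 0$, then $|\tilde\tau\lambda_{\tilde\tau}|\to\infty$ and dividing $F(\lambda_{\tilde\tau},\tilde\tau)=0$ by $\tilde\tau\lambda_{\tilde\tau}$ passes in the limit to $A(\lambda_\infty)=\tfrac12\int w^2$, where $A(\lambda):=\int w(L_0-\lambda)^{-1}w^3$. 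Using the spectral decomposition of $L_0$ in $H^1_r(\R^2)$ with eigenvalues $\mu_0>0>\mu_3>\mu_4>\cdots$ and coefficients $c_k=\int w\psi_k$, the identity $\int ww_0=0$ reads $\sum c_k^2/\mu_k=0$, and one computes $A(\lambda)-\tfrac12\int w^2=\tfrac{\lambda}{2}\sum c_k^2/(\mu_k-\lambda)$. The only zero of this expression with $Re(\lambda)\geq 0$ is $\lambda=0$, because $\mathrm{Im}\sum c_k^2/(\mu_k-\lambda)$ has a fixed sign for $\mathrm{Im}(\lambda)\neq 0$, and for real $\lambda>0$ the sum is strictly monotonically increasing from $0$. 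This forces $\lambda_{\tilde\tau}\to 0$, and then the expansion of $F$ pins it down to one of the two upper-half-plane cube roots.

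For part (ii), I would invoke continuity of the eigenvalue branches of the analytic function $F(\cdot,\tilde\tau)$ in $\tilde\tau$. At $\tilde\tau=0$, \eqref{s1} collapses to the NLEP of Lemma \ref{lem:tau=0} with $\gamma=3>2$, so every eigenvalue satisfies $Re(\lambda)\leq -c_0<0$; at $\tilde\tau=+\infty$ part (i) supplies a conjugate pair of eigenvalues with $Re(\lambda)>0$. Consequently, a branch of eigenvalues must cross the imaginary axis at some first $\tilde\tau_h\in(0,\infty)$, giving a conjugate pair $\lambda=\pm i\omega_h$ with $\omega_h>0$; transversality $\partial_{\tilde\tau}Re(\lambda)|_{\tilde\tau_h}\neq 0$ follows from implicitly differentiating $F=0$ at the simple zero $(\lambda,\tilde\tau)=(i\omega_h,\tilde\tau_h)$. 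The two main obstacles I anticipate are (a) making the Rouché step rigorous in the rescaled variable (ensuring uniform non-degeneracy of the cubic limit on small circles around each of its three simple roots), and (b) the a priori exclusion of bounded-modulus unstable eigenvalues as $\tilde\tau\to\infty$, where one has to avoid accidental resonances with the single positive radial eigenvalue $\mu_0$ of $L_0$.
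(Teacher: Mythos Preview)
Your approach is correct and arrives at the same cubic reduction $\lambda^3\sim -c_0/\tilde\tau$, but the route is genuinely different from the paper's. The paper never writes down the dispersion relation $F(\lambda,\tilde\tau)=0$; instead it works directly with the eigenfunction. For the key step (your exclusion of $|\lambda_{\tilde\tau}|\geq c>0$), the paper introduces the self-adjoint operator $L_1\phi=L_0\phi-2\frac{\int w\phi}{\int w^2}w^3-2\frac{\int w^3\phi}{\int w^2}w+2\frac{\int w^4\int w\phi}{(\int w^2)^2}w$ and uses its coercivity modulo $\mathrm{span}\{w,w_0,\partial_jw\}$ (from Wei 1999) together with an algebraic identity obtained by testing the limiting NLEP against $w$; this energy argument never needs $(L_0-\lambda)^{-1}$ and so sidesteps your obstacle (b) entirely. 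For the asymptotic expansion, the paper decomposes $\phi=w+\phi^\perp$ with $\int w\phi^\perp=0$, tests against $w_0$, and iterates: first $\frac{1}{1+\tilde\tau\lambda}=o(\lambda)$ (using $\int ww_0=0$), then $\phi^\perp=\lambda(1+o(1))w_0$, and finally feeds this back into the $w_0$-projection to read off the cubic. Existence of the two right-half-plane eigenvalues for large $\tilde\tau$ is only asserted in the paper, whereas your Rouch\'e argument makes this step explicit.

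What your approach buys is a clean, complex-analytic package: the Taylor expansion of $(L_0-\lambda)^{-1}w^3$ delivers all the needed integral identities at once, and Rouch\'e gives both existence and the count of roots. Your Herglotz argument for $g(\lambda)=\int w(L_0-\lambda)^{-1}w$ is an elegant alternative to the $L_1$ machinery (note that $L_0$ has essential spectrum $(-\infty,-1]$, so the ``sum over eigenvalues'' should really be the spectral measure of $w$, but the sign argument survives intact). The paper's approach, by contrast, avoids any resolvent and thus any worry about $\lambda$ hitting $\sigma(L_0)$; it also makes the role of the 2D identity $\int ww_0=0$ appear at the same pivot point (the $w_0$-test), which is why the 1D analysis in the next section, where $\int ww_0=\frac14\int w^2\neq 0$, produces a quadratic rather than cubic balance. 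Two small gaps in your write-up: you should first rule out $|\lambda_{\tilde\tau}|\to\infty$ (divide $F$ by $\tilde\tau\lambda$ and use $(L_0-\lambda)^{-1}w^3\to 0$), and the transversality claim $\partial_{\tilde\tau}\mathrm{Re}(\lambda)\neq 0$ at $\tilde\tau_h$ is not established in the paper either---there ``Hopf bifurcation'' just means a first crossing, via Dancer's continuation argument, exactly as you outline.
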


We prove the result with a series of claims.

From now on we may assume that $ \lambda=\lambda_R+i \lambda_I$ with $\lambda_R\in \R, \lambda_I\in \R$ and $\lambda_R \geq 0$.

\medskip

\noindent
{\bf Claim 1:} If $\lambda_R \geq 0$, then $ |\lambda|\leq C$ where $C$ is independent of $\tilde{\tau}$.

\begin{proof}

Multiplying (\ref{s1}) by $\bar{\phi}$ (the complex conjugate of $\phi$), and integrating over $\R^2$ we obtain
\begin{equation}\label{1+}
\begin{aligned}
 &\int_{\R^2} \left(|\nabla \phi|^2 +|\phi|^2 - 3 w^2 |\phi|^2 + \lambda \int_{\R^2} |\phi|^2\right) \\
 &= -\frac{3}{1+\tilde{\tau} \lambda} \frac{\int_{\R^2} w^2 \phi}{\int_{\R^2} w^3 } \int_{\R^2} w^3\bar{\phi}-\frac{2\tilde{\tau} \lambda}{1+\tilde{\tau} \lambda} \frac{\int_{\R^2} w \phi}{ \int_{\R^2} w^2} \int_{\R^2} w^3\bar{\phi}
\end{aligned}
\end{equation}
Let $\mu_0>0$ be the first eigenvalue of $L_0$ given in Lemma \ref{lem:L0}. We have by the  variational representation of $\mu_0$ that   $$\int_{\R^2} (|\nabla \phi|^2 +|\phi|^2 - 3 w^2 |\phi|^2)\geq -\mu_0 \int_{\R^2} |\phi|^2.$$
The integrals on the right-hand side of \eqref{1+} can be estimated using the Holder inequalities:
\[
\left|\int_{\R^2} w^{k}\phi\right|\leq \left(\int_{\R^2}w^{2k}\right)^{1/2}\left(\int_{\R^2}|\phi|^2\right)^{1/2},\;\;k=1, 2,\]
and
\[\left|\int_{\R^2} w^3\bar{\phi}\right|\leq \left(\int_{\R^2}w^2\right)^{1/2}\left(\int_{\R^2}|\phi|^2\right)^{1/2}.
\]
It follows that
$$ |\lambda| \leq \mu_0  + C \left(\Big|\frac{3}{1+\tilde{\tau} \lambda}\Big|+ \Big|\frac{2\tilde{\tau} \lambda}{1+\tilde{\tau} \lambda}\Big|\right),$$
which together with $\lambda_R\geq 0$ implies $|\lambda|\leq C$.

The claim is proved.
\end{proof}

\medskip

\noindent
{\bf Claim 2:} If $\tilde{\tau} \to +\infty$  then $ \lambda \to 0$.

\begin{proof}
Suppose the claim is not true. Then up to a subsequence, we have $\tilde{\tau}\to +\infty$ $\lambda \to \lambda_\infty \not = 0$. Then $\frac{1}{1+\tilde{\tau} \lambda} \to 0$, $\frac{\tilde{\tau} \lambda}{1+\tilde{\tau} \lambda} \to 1$, and we obtain the following limiting problem
\begin{equation}
\label{2}
\Delta \phi -\phi+ 3w^2 \phi -2 \frac{\int_{\R^2} w \phi}{ \int_{\R^2} w^2} w^3 =\lambda_\infty \phi.
\end{equation}

The rest of the proof follows the line of the proof of Case 2 of Theorem 1.4 of \cite{Wei1999} (for the case $p=3=1+\frac{4}{2}$), with a few  necessary modifications.

Let the linear operator $L_1:H^1(\R^2)\to L^2(\R^2)$ be defined by
\begin{equation*}
L_1\phi:=L_0\phi-2\frac{\int_{\R^2} w\phi}{\int_{\R^2} w^2}w^3-2\frac{\int_{\R^2} w^3\phi}{\int_{\R^2} w^2}w+2\frac{\int_{\R^2} w^{4}\int_{\R^2} w\phi}{\left(\int_{\R^2} w^2\right)^2}w,\qquad \phi\in H^1(\R^2).
\end{equation*}
Then $L_1$ is self-adjoint.

According to Lemma 5.2 of \cite{Wei1999}, we have
\begin{itemize}
\item The kernel of $L_1$ is given by $X_1=\textnormal{span}\{w, w_0, \frac{\partial w}{\partial y_j}, j=1, 2\}$, where
    \[
   w_0:=\frac{1}{2}w+\frac{1}{2}y\nabla w.
    \]
    \item There exists a positive constant $a_1>0$ such that for all $\phi\in H^1(\R^2)$
    \[\begin{aligned}
    L_1(\phi, \phi)=L_0(\phi, \phi)&
    +4\frac{\int_{\R^2} w\phi\int_{\R^2} w^3\phi}{\int_{\R^2} w^2}-2\frac{\int_{\R^2} w^{4}}{\left(\int_{\R^2} w^2\right)^2}\left(\int_{\R^2} w\phi\right)^2\\
    &\geq a_1d^2_{L^2(\R^2)}(\phi, X_1),
    \end{aligned}\]
    where
    \[
    L_0(\phi, \phi):=\int_{\R^2}(|\nabla \phi|^2+\phi^2-3w^2\phi^2),
    \]
    and
    $d_{L^2(\R^2)}(\phi, X_1)$ is the distance of $\phi$ to $X_1$ in the space of $L^2(\R^2)$.
\end{itemize}

Now we are ready to prove the claim. Let $\lambda_\infty=\lambda_R+i\lambda_I$ and $\phi=\phi_R+i\phi_I$. Then we have the system of equations
\begin{equation}\label{RP}
L_0\phi_R-2 \frac{\int_{\R^2} w \phi_R}{ \int_{\R^2} w^2} w^3 =\lambda_R \phi_R-\lambda_I\phi_I,
\end{equation}
\begin{equation}\label{IP}
L_0\phi_I-2 \frac{\int_{\R^2} w \phi_I}{\int_{\R^2} w^2} w^3 =\lambda_R \phi_I+\lambda_I\phi_R.
\end{equation}
Multiplying \eqref{RP} by $\phi_R$, \eqref{IP} by $\phi_I$, integrating over $\R^2$, and summing up, we obtain
\begin{equation}
\begin{aligned}
L_0(\phi_R, \phi_R)&+L_0(\phi_I, \phi_I)+\lambda_R\int_{\R^2} (\phi_R^2+\phi_I^2)\\
&=-\frac{2}{\int_{\R^2} w^2}\left(\int_{\R^2} w^3\phi_R\int_{\R^2} w\phi_R+\int_{\R^2} w^3\phi_I\int_{\R^2} w\phi_I\right),
\end{aligned}
\end{equation}
or in the form
\begin{equation}\label{RI}
\begin{aligned}
&L_1(\phi_R, \phi_R)+L_1(\phi_I, \phi_I)+\lambda_R\int_{\R^2} (\phi_R^2+\phi_I^2)\\
&=2\frac{\int_{\R^2} w\phi_R\int_{\R^2} w^3\phi_R+\int_{\R^2} w\phi_I\int_{\R^2} w^3\phi_I}{\int_{\R^2} w^2}\\
&-\frac{2\int_{\R^2} w^{4}}{\left(\int_{\R^2} w^2\right)^2}\left[\left(\int_{\R^2} w\phi_R\right)^2+\left(\int_{\R^2} w\phi_I\right)^2\right].
\end{aligned}
\end{equation}
Multiplying \eqref{RP} and \eqref{IP} by $w$ respectively, and integrating over $\R^2$, we obtain
\begin{equation}\label{RP+}
2\int_{\R^2} w^3\phi_R-2 \frac{\int_{\R^2} w^4}{ \int_{\R^2} w^2}\int_{\R^2} w \phi_R =\lambda_R\int_{\R^2}  w\phi_R-\lambda_I\int_{\R^2} w\phi_I,
\end{equation}
\begin{equation}\label{IP+}
2\int_{\R^2} w^3\phi_I-2 \frac{\int_{\R^2} w^4}{ \int_{\R^2} w^2}\int_{\R^2} w \phi_I =\lambda_R\int_{\R^2}  w\phi_I+\lambda_I\int_{\R^2} w\phi_R.
\end{equation}
Multiplying \eqref{RP+} by $\int_{\R^2} w\phi_R$, \eqref{IP+} by $\int_{\R^2} w\phi_I$, and summing up, we obtain
\begin{equation}\label{RI+}
\begin{aligned}
&\int_{\R^2} w\phi_R\int_{\R^2} w^3\phi_R+\int_{\R^2} w\phi_I\int_{\R^2} w^3\phi_I\\
&=\left(\frac{\lambda_R}{2}+\frac{\int_{\R^2} w^4}{ \int_{\R^2} w^2}\right)\left[\left(\int_{\R^2} w\phi_R\right)^2+\left(\int_{\R^2} w\phi_I\right)^2\right].
\end{aligned}
\end{equation}
Plugging \eqref{RI+} into \eqref{RI} we obtain
\begin{equation}\label{RI++}
\begin{aligned}
&L_1(\phi_R, \phi_R)+L_1(\phi_I, \phi_I)+\lambda_R\int_{\R^2} (\phi_R^2+\phi_I^2)\\
&
=\frac{\lambda_R}{\int_{\R^2} w^2}\left[\left(\int_{\R^2} w\phi_R\right)^2+\left(\int_{\R^2} w\phi_I\right)^2\right].
\end{aligned}
\end{equation}
We decompose
\[
\phi_R=b_Rw+c_Rw_0+\sum_{j=1}^2d_{Rj}\frac{\partial w}{\partial y_j}+\phi_R^{\perp},\quad\phi_R^{\perp}\perp X_1,
\]
\[
\phi_I=b_Iw+c_Iw_0+\sum_{j=1}^2d_{Ij}\frac{\partial w}{\partial y_j}+\phi_I^{\perp},\quad\phi_I^{\perp}\perp X_1,
\]
and  put them into \eqref{RI++} and calculate
\[
\left(\int_{\R^2} w\phi_R\right)^2+\left(\int_{\R^2} w\phi_I\right)^2=(b_R^2+b_I^2)\left(\int_{\R^2}w^2\right)^2,
\]
\[\begin{aligned}
\int_{\R^2}(\phi_R^2+\phi_I^2)=(b_R^2+b_I^2)\int_{\R^2}w^2+\int_{\R^2}[(\phi_R-b_Rw)^2+(\phi_I-b_Iw)^2].
\end{aligned}\]
Therefore we deduce from
\eqref{RI++} that
\begin{equation}\label{RI+++}
\begin{aligned}
L_1(\phi^{\perp}_R, \phi^{\perp}_R)+L_1(\phi^{\perp}_I, \phi^{\perp}_I)+\lambda_R\int_{\R^2}[(\phi_R-b_Rw)^2+(\phi_I-b_Iw)^2] =0,
\end{aligned}
\end{equation}
and so
\begin{equation}
\lambda_R\int_{\R^2}[(\phi_R-b_Rw)^2+(\phi_I-b_Iw)^2]+a_1(\|\phi_R^{\perp}\|^2_{L^2(\R^2)}+\|\phi_I^{\perp}\|^2_{L^2(\R^2)})\leq 0.
\end{equation}

If $\lambda_R>0$, then we have $\phi_R=b_Rw$ and $\phi_I=b_Iw$. Putting $\phi_R$ and $\phi_I$ into equations \eqref{RP} and \eqref{IP} we get the linear system of $(b_R, b_I)$:
\[\left\{\begin{aligned}&\lambda_Rb_R-\lambda_Ib_I=0,\\
&\lambda_Rb_I-\lambda_Ib_R=0.\end{aligned}\right.
\]
Clearly $b_R=b_I=0$ and so $\phi=0$. We have a contradiction.

If $\lambda_R=0$, then  we have $\phi_R^{\perp}=\phi_I^{\perp}=0$. Putting $\phi_R$ and $\phi_I$ into equations \eqref{RP} and \eqref{IP}, using the facts $w_{y_j}\in Kernal(L_0)$ and $L_0[w_0]=w$, we get
\[\left\{\begin{aligned}&c_Rw=-\lambda_I\left(b_Iw+c_Iw_0+\sum_{j=1}^2d_{Ij}w_{y_j}\right),\\
&c_Iw=\lambda_I\left(b_Rw+c_Rw_0+\sum_{j=1}^2d_{Rj}w_{y_j}\right).\end{aligned}\right.
\]
Suppose $\lambda_I\neq 0$, we must have $b_R=b_I=c_R=c_I=d_{R1}=d_{R2}=d_{I1}=d_{I2}=0$, contradicting to the assumption that $\phi$ is nontrivial. Therefore
$\lambda_\infty=\lambda_R+i\lambda_I=0$, a contradiction.

 The claim is proved.
\end{proof}

Next we discuss possible limits of $\tilde{\tau} \lambda$.

\noindent
{\bf Claim 3:} $ |\tilde{\tau} \lambda | \to +\infty$ as $\tilde{\tau} \to +\infty$.

\begin{proof}
Suppose the claim is not true. Then along a subsequence $\tilde{\tau} \lambda \to \mu_{\infty} \in {\mathbb C}$ as $\tilde{\tau}\to \infty$. By Claim 1 we arrive at the following equation
\begin{equation}
\label{3}
\Delta \phi -\phi+ 3w^2 \phi -\frac{3}{1+\mu_{\infty} } \frac{\int_{\R^2} w^2 \phi}{\int_{\R^2} w^3} w^3-\frac{2\mu_{\infty}}{1+\mu_{\infty}} \frac{\int_{\R^2} w \phi}{ \int_{\R^2} w^2} w^3 =0.
\end{equation}
Hence
$$\phi=\frac{3}{1+\mu_{\infty} } \frac{\int_{\R^2} w^2 \phi}{\int_{\R^2} w^3} L_0^{-1}[w^3] +\frac{2\mu_{\infty}}{1+\mu_{\infty}} \frac{\int_{\R^2} w \phi}{ \int_{\R^2} w^2} L_0^{-1}[w^3]. $$
Using $L^{-1}_0[w^3]=\frac{1}{2}w$ we obtain
\begin{equation}\label{phi1+}
 2 \phi= \frac{3}{1+\mu_{\infty} } \frac{\int_{\R^2} w^2 \phi}{\int_{\R^2} w^3}w +\frac{2\mu_{\infty}}{1+\mu_{\infty}} \frac{\int_{\R^2} w \phi}{ \int_{\R^2} w^2} w.
 \end{equation}
Set $$A=\int_{\R^2} w \phi, \qquad B= \int_{\R^2} w^2 \phi.$$ Multiplying \eqref{phi1+} by $w$ and integrating over $\R^2$ we obtain
\begin{equation}\label{AB1}
\left(\frac{2\mu_{\infty}}{1+\mu_{\infty}}-2\right)A+\frac{3}{1+\mu_{\infty}} \frac{\int_{\R^2} w^2}{\int_{\R^2} w^3} B=0.
\end{equation}\label{AB2}
 Multiplying \eqref{phi1+} by $w^2$ and integrating over $\R^2$ we obtain
\begin{equation}
\frac{2\mu_{\infty}}{1+\mu_{\infty}} \frac{\int_{\R^2} w^3}{\int_{\R^2} w^2} A+\left(\frac{3}{1+\mu_{\infty}}-2\right) B =0.
\end{equation}
For the linear system \eqref{AB1}, \eqref{AB2} to have a solution $(A, B)\neq (0, 0)$, we must have
\[
\left(\frac{2\mu_{\infty}}{1+\mu_{\infty}}-2\right)\cdot\left(\frac{3}{1+\mu_{\infty}}-2\right)-\frac{3}{1+\mu_{\infty}} \frac{\int_{\R^2} w^2}{\int_{\R^2} w^3}\cdot\frac{2\mu_{\infty}}{1+\mu_{\infty}} \frac{\int_{\R^2} w^3}{\int_{\R^2} w^2} =0,
\]
and so $\mu_{\infty}=-1$, which is impossible since $Re(\mu_{\infty})\geq 0$.

Therefore $A=B=0$. By \eqref{phi1+} we have $\phi=0$, a contradiction.

\end{proof}

Therefore $|\tilde{\tau} \lambda| \to +\infty, \lambda \to 0$. We see that $\phi \to \phi_0$ in $H^1(\R^2)$ which satisfies
\begin{equation}
\label{4}
\Delta \phi_0 -\phi_0+ 3w^2 \phi_0 -2 \frac{\int_{\R^2} w \phi_0}{ \int_{\R^2} w^2} w^3 =0
\end{equation}
and hence
\[
\phi_0= 2 \frac{\int_{\R^2} w \phi_0}{ \int_{\R^2} w^2} L_0^{-1}[w^3]=\frac{\int_{\R^2} w \phi_0}{ \int_{\R^2} w^2} w=Cw.
\]
Without loss of generality, we may assume that $C=1$.

Let us decompose
$$ \phi = w + \phi^\perp$$
with
\begin{equation}
\label{41}
 \int_{\R^2} w \phi^\perp= 0.
 \end{equation}
In this way,  $\phi^\perp \to 0 $ in $H^1(\R^2)$ as $\tilde{\tau} \to +\infty$.

We then have
\begin{equation}
\label{5}
\Delta \phi^\perp -\phi^\perp+ 3w^2 \phi^\perp -\frac{3}{1+\tilde{\tau} \lambda} \frac{\int_{\R^2} w^2 \phi^\perp}{\int_{\R^2} w^3} w^3=\lambda \phi^\perp + \frac{1}{1+\tilde{\tau} \lambda}  w^3 + \lambda w.
\end{equation}

\noindent
{\bf Claim 4:} $ \frac{1}{1+\tilde{\tau} \lambda}= o(\lambda)$.

\begin{proof}

Multiplying (\ref{5}) by $w_0=L_0^{-1} [w]=\frac{1}{2}w+\frac{1}{2} y\nabla w $ and using (\ref{41}) we get
\begin{equation}
\label{7}
 \frac{1}{1+\tilde{\tau} \lambda} \int_{\R^2}  w^3 w_0 = - \lambda \int_{\R^2}\phi^\perp w_0 -\frac{3}{1+\tilde{\tau} \lambda} \frac{\int_{\R^2} w^2 \phi^\perp}{\int_{\R^2} w^3} \int_{\R^2} w^3 w_0,
 \end{equation}
where we have used the identities \[\int_{\R^2} ww_0=0,\qquad\int_{\R^2} w_0L_0[\phi^{\perp}]=\int_{\R^2}\phi^{\perp} L_0[w_0]=\int_{\R^2}\phi^{\perp} w=0.\]
Note \[ \| \phi^\perp\|_{L^2(\R^2)}= o(1),\]
\[\int_{\R^2} w^3w_0=\frac{1}{2}\int_{\R^2} w^3(w+y\nabla w)=\frac{1}{4}\int_{\R^2} w^4>0.\]
Putting the last two identities into \eqref{7} we obtain
\[\frac{1+o(1)}{1+\tilde{\tau}\lambda}\int_{\R^2}w^4=o(\lambda).\]
The claim is proved.
\end{proof}

\medskip
\noindent
{\bf Claim 5:} $ \phi^\perp = \lambda (1+ o(1)) w_0 $.

\begin{proof}
Since $ \frac{1}{1+\tilde{\tau} \lambda}= o(\lambda)$, we set $ \phi^\perp=\lambda \phi_1$, then
$$ \begin{aligned}L_0 \phi_1&=\frac{3}{1+\tilde{\tau} \lambda} \frac{\int_{\R^2} w^2 \phi_1}{\int_{\R^2} w^3} w^3+\lambda \phi_1+ \frac{1}{\lambda(1+\tilde{\tau}\lambda)}  w^3 +  w\\
 &=  o(1)w^3 + o(1) \phi_1+w \end{aligned}$$
Since $ L_0^{-1}$ exists we have
\[
\phi_1=L_0^{-1}[w]+o(1)(L_0^{-1}[w^3]+L_0^{-1}[\phi_1])=(1+o(1))w_0.
\]
The claim is proved.
\end{proof}

Finally we derive the equation for $\lambda$: from (\ref{7}) we get
\[
\frac{1+o(1)}{4(1+\tilde{\tau}\lambda)}\int_{\R^2}w^4=-\lambda^2\int_{\R^2}(w_0)^2-\frac{3\lambda}{1+\tilde{\tau}\lambda}\frac{\int_{\R^2}w^2w_0}{\int_{\R^2}w^3}\int_{\R^2}w^3w_0.
\]
We calculate
\[
\int_{\R^2}w^2w_0=\frac{1}{2}\left(\int_{\R^2}w^3+\int_{\R^2}w^2y\nabla w\right)=\frac{1}{6}\int_{\R^2}w^3.
\]
Together with the two already known identities
\[
\int_{\R^2}w^3w_0=\frac{1}{4}\int_{\R^2}w^4,\quad \int_{\R^2}w^4=2\int_{\R^2}w^2
\]
we obtain
$$ -\frac{1}{2}[1+o(1)] \int_{\R^2} w^2 =(\tilde{\tau}\lambda^3+\lambda^2) \int_{\R^2} (w_0)^2.   $$
Therefore, since $\lambda^2=o(1)$, $\lambda$ must satisfy the following algebraic equation
$$ \lambda^{3}= -[1+o(1)]\frac{c_0}{\tilde{\tau}}$$
where $c_0= \frac{\int_{\R^2} w^2}{2\int_{\R^2} (w_0)^2}>0$.

Since $ Re (\lambda) \geq 0$, we obtain two conjugate solutions \[ \lambda \sim \tilde{\tau}^{-1/3}c_0^{1/3} e^{\frac{\pi}{3} i}\qquad\text{or}\qquad\lambda= \tilde{\tau}^{-1/3} c_0^{1/3} e^{-\frac{\pi}{3} i}.\] We see that \[ Re(\lambda)=\lambda_R \sim \frac{1}{2} \tilde{\tau}^{-1/3} c_0^{1/3} >0.\]

Conversely we can also easily construct eigenvalues with $ \lambda \sim \tilde{\tau}^{-1/3}c_0^{1/3} e^{\frac{\pm\pi}{3} i}$.

\medskip
\noindent
{\bf Claim 6:} There exists a Hopf bifurcation at some $\tilde{\tau}_h>0$.

\begin{proof}
This claim can be proved by using a continuation argument of Dancer \cite{Dancer2001}. As in Dancer \cite{Dancer2001}, we may only consider radial eigenfunctions. Then $0$ is not an eigenvalue of \eqref{NLEP++}. If $\tilde{\tau}=0$, by the remark of Theorem 1 of \cite{Wei2001}, all the eigenvalues of \eqref{NLEP++} has negative real parts. By Claim 5, there exists some $\tilde{\tau}_*>0$ large enough such that \eqref{NLEP++} has an eigenvalue with positive real part. Therefore there is some $\tilde{\tau}_h\in (0, \tilde{\tau}_*)$, \eqref{NLEP++} has a pair of conjugate pure imaginary eigenvalues.
\end{proof}

\begin{rem}
The argument in this section does not restrict to radial eigenfunctions.
\end{rem}

\section{The stability of the spike solutions in the one-dimensional case}\label{sec: NLEP2}

When the space dimension  $N=1$. In the near shadow case, as in the two-dimensional case, the stability of the original system is determined by the following nonlocal eigenvalue problem
\begin{equation}
\label{1D1}
L_0\phi-\frac{3}{1+\tilde{\tau} \lambda} \frac{\int_{\R} w^2 \phi}{\int_{\R} w^3} w^3-\frac{2\tilde{\tau} \lambda}{1+\tilde{\tau} \lambda} \frac{\int_{\R} w \phi}{ \int_{\R} w^2} w^3 =\lambda \phi,\qquad \phi\in H^1(\R),
\end{equation}
where
\begin{equation}
L_0\phi:=\Delta \phi -\phi+ 3w^2 \phi,
\end{equation}
and
\begin{equation}
\tilde{\tau}=\frac{\ve\tau(\ve)}{\gamma_0|B_R|}\int_{\R}w^2\,dy.
\end{equation}

If $\tilde{\tau}=0$, by the remark of Theorem 1 of \cite{Wei2001}, any eigenvalue of \eqref{1D1} must satisfy $ Re (\lambda)<0$.  Hence for $\tilde{\tau} $ small problem \eqref{1D1} is stable.

On the other hand we have the following result for the large $\tilde{\tau}$ case.

\begin{thm}\label{thm6.1}
There exists $\tilde{\tau}_0>0$ such that for $\tilde{\tau} >\tilde{\tau}_0$, any eigenvalue of \eqref{1D1} must satisfy $ Re (\lambda) <0$.
\end{thm}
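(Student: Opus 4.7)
The plan is to argue by contradiction along the same scheme used for the 2D Hopf bifurcation analysis in Section 5, but with the key observation that the relevant 1D integral identities conspire to produce a stabilizing quadratic equation for $\lambda$ rather than the destabilizing cubic of the two-dimensional case. Suppose for contradiction that there is a sequence $\tilde{\tau}\to +\infty$ and corresponding eigenpairs $(\lambda,\phi)$ of \eqref{1D1} with a nontrivial $\phi\in H^1(\R)$ and $\textnormal{Re}(\lambda)\geq 0$. I would first establish the analogues of Claims 1, 2, 3 of Section 5: the bound $|\lambda|\leq C$ (from testing \eqref{1D1} against $\bar{\phi}$ and using the Hölder bound on the two nonlocal terms), the convergence $\lambda\to 0$ (via the quadratic-form/positive-operator argument based on the inequality for $L_1$ adapted to $\R$), and $|\tilde{\tau}\lambda|\to\infty$ (by ruling out finite limits $\mu_\infty$ of $\tilde{\tau}\lambda$ through the same $2\times 2$ algebraic system in $(A,B)=(\int w\phi,\int w^2\phi)$). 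These arguments are essentially dimension-free and transfer verbatim.

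After these claims, one has (up to normalization) $\phi\to w$ in $H^1(\R)$. Decompose $\phi=w+\phi^{\perp}$ with $\int_\R w\phi^{\perp}=0$; subtracting $L_0 w=2w^3$ yields, as in the 2D case,
\begin{equation*}
L_0\phi^{\perp}-\frac{3}{1+\tilde{\tau}\lambda}\frac{\int_\R w^2\phi^{\perp}}{\int_\R w^3}w^3 \;=\;\lambda\phi^{\perp}+\frac{1}{1+\tilde{\tau}\lambda}w^3+\lambda w.
\end{equation*}
The critical step is to multiply by $w_0=\tfrac12 w+\tfrac12 yw'$ (which satisfies $L_0 w_0=w$) and integrate over $\R$, using self-adjointness and $\int_\R w\phi^{\perp}=0$. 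The central departure from the two-dimensional case is the identity
\begin{equation*}
\int_\R w\, w_0\,dy=\frac{2-N}{4}\int_\R w^2\,dy\;=\;\frac{1}{4}\int_\R w^2\,dy\;\neq\; 0\qquad(N=1),
\end{equation*}
which contributes an extra genuinely linear-in-$\lambda$ term $\frac{\lambda}{4}\int_\R w^2$ on the right-hand side that was absent in 2D.

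Using in addition the 1D identities $\int_\R w^3 w_0=\frac{3}{8}\int w^4=\frac{1}{2}\int w^2$ and $\int_\R w^2 w_0=\frac{1}{3}\int w^3$, and the fact that $\phi^{\perp}\to 0$ in $H^1(\R)$ so that $\int w^2\phi^{\perp}$ and $\int\phi^{\perp}w_0$ are negligible compared to $\int w^2$, the identity obtained by testing against $w_0$ reduces at leading order to
\begin{equation*}
-\frac{1}{2(1+\tilde{\tau}\lambda)}\int_\R w^2\,(1+o(1))\;=\;\frac{\lambda}{4}\int_\R w^2\,(1+o(1)).
\end{equation*}
(One may, as in the 2D Claim 5, subsequently show $\phi^{\perp}=\lambda(w_0-\tfrac14 w)(1+o(1))$ to verify that the discarded cross terms are indeed $o$ of the retained ones; the combination $w_0-\tfrac14 w$ is precisely what is forced by the orthogonality $\int w\phi^{\perp}=0$.) Clearing denominators gives
\begin{equation*}
\tilde{\tau}\lambda^{2}+a\lambda+b\;=\;o(1)
\end{equation*}
with positive constants $a,b>0$ (in particular $b=2$). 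For $\tilde{\tau}$ large the two roots are $\lambda=\bigl(-a\pm\sqrt{a^{2}-4b\tilde{\tau}}\bigr)/(2\tilde{\tau})$, which are complex conjugates with real part $-a/(2\tilde{\tau})<0$; this contradicts $\textnormal{Re}(\lambda)\geq 0$, proving the theorem.

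The main obstacle, as I see it, is justifying the leading-order reduction rigorously: one must confirm that the terms omitted above (namely $\frac{3\int w^2\phi^{\perp}}{(1+\tilde{\tau}\lambda)\int w^3}\int w^3 w_0$ and $\lambda\int\phi^{\perp}w_0$) are truly of lower order than the retained $\frac{\lambda}{4}\int w^2$ and $\frac{1}{2(1+\tilde{\tau}\lambda)}\int w^2$ terms under the regime $\lambda=O(\tilde{\tau}^{-1/2})$. This requires the refined expansion $\phi^{\perp}=\lambda(w_0-\tfrac14 w)+o(\lambda)$, which itself depends on having already identified $\tfrac{1}{1+\tilde{\tau}\lambda}=O(\lambda)$; the two estimates must therefore be bootstrapped in the correct order, exactly as in Claims 4 and 5 of Section 5, but with the 1D coefficients tracked throughout.
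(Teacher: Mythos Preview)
Your outline is correct through Claims~1--3 and the decomposition $\phi=w+\phi^{\perp}$, and you correctly identify the key 1D identity $\int_{\R} ww_0=\frac14\int_{\R}w^2\neq0$ that changes the character of the problem from cubic to quadratic. The gap is in the final step. Your leading-order reduction gives
\[
\tilde{\tau}\lambda^{2}+\lambda+2=o(1),
\]
but the $o(1)$ here is only $O(\lambda)=O(\tilde{\tau}^{-1/2})$: the discarded terms $\lambda\int\phi^{\perp}w_0$ and $\frac{3}{1+\tilde{\tau}\lambda}\frac{\int w^2\phi^{\perp}}{\int w^3}\int w^3w_0$ are each $O(\lambda^2)$ before clearing denominators, hence $O(\tilde{\tau}\lambda^3)=O(\lambda)$ afterwards. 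The exact quadratic $\tilde{\tau}\lambda^{2}+\lambda+2=0$ has roots $\lambda=\pm i\sqrt{2/\tilde{\tau}}-\tfrac{1}{2\tilde{\tau}}+O(\tilde{\tau}^{-3/2})$, so $\textnormal{Re}(\lambda)$ lives at order $\tilde{\tau}^{-1}$. An $O(\lambda)$ perturbation of the equation shifts the roots by $O(\tilde{\tau}^{-1})$, the \emph{same} order. In other words, the ``error'' terms you discard contribute to the effective linear coefficient $a$ at order one, not at $o(1)$, and your conclusion $a>0$ is unjustified.

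What the paper actually does (and what your outline is missing) is to feed the refined expansion $\phi^{\perp}=\lambda\phi_0+o(\lambda)$ with $\phi_0=w_0-\tfrac14 w$ \emph{back} into the $w_0$-identity and compute the next-order correction explicitly. Using $\tilde{\tau}\lambda^2\sim-2$ this produces a contribution $-\tilde{\tau}\lambda^3\int\phi_0w_0\sim 2\lambda\int\phi_0w_0$ to the linear term, together with a piece $-\tfrac12\lambda$ from the $\int w^2\phi_0=\tfrac{1}{12}\int w^3$ term. The upshot is an equation of the form $\tilde{\tau}\lambda^{2}+a\lambda+2=o(\lambda)$ with
\[
a=\frac{3}{2}-\frac{8\int_{\R}\phi_0w_0}{\int_{\R}w^2}
=\frac{3}{2}-\frac{8\int_{\R}w_0^2}{\int_{\R}w^2}+\frac12
=2-\frac{2\int_{\R}y^2w_y^2}{\int_{\R}w^2},
\]
so that $\textnormal{Re}(\lambda)=-\tfrac{a}{2\tilde{\tau}}+o(\tilde{\tau}^{-1})$. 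The sign of $a$ is therefore \emph{not} for free: it requires the explicit one-dimensional computation (carried out in the paper's Appendix, using $w(y)=\sqrt{2}\,\sech y$) that $\int_{\R}y^2w_y^2=\tfrac83+\tfrac{\pi^2}{9}$ and $\int_{\R}w^2=4$, giving $a=\tfrac{12-\pi^{2}}{18}>0$. The positivity rests on the numerical inequality $\pi^{2}<12$; had this integral been slightly larger the spike would be unstable for large $\tilde{\tau}$. Your proposal stops one order too early and thereby misses the entire substance of the argument.
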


We prove this theorem through a series of claims.
From now on we may assume that $\tilde{\tau}>0$ is large and $ \lambda=\lambda_R+i \lambda_I$ with $\lambda_R \geq 0$.

We first claim:

\medskip

\noindent
{\bf Claim 1:} If $\lambda_R \geq 0$, then $ |\lambda|\leq C$ for some positive constant  independent of $\tilde{\tau}$.

\begin{proof}

Multiplying \eqref{1D1} by $\bar{\phi}$ (the complex conjugate of $\phi$), and integrating the resultant equation over $\R$ we obtain
\begin{equation}\label{1D1+}
\begin{aligned}
 &\int_{\R} (|\nabla \phi|^2 +|\phi|^2 - 3 w^2 |\phi|^2) + \lambda \int_{\R} |\phi|^2 \\
 &= -\frac{3}{1+\tilde{\tau} \lambda} \frac{\int_{\R} w^2 \phi}{\int_{\R} w^3 } \int_{\R} w^3\bar{\phi}-\frac{2\tilde{\tau} \lambda}{1+\tilde{\tau} \lambda} \frac{\int_{\R} w \phi}{ \int_{\R} w^2} \int_{\R} w^3\bar{\phi}.
\end{aligned}
\end{equation}
Let $\mu_0>0$ be the first eigenvalue of $L_0$ given in Lemma \ref{lem:L0}. We have by the  variational representation of $\mu_0$ that   $$\int_{\R} (|\nabla \phi|^2 +|\phi|^2 - 3 w^2 |\phi|^2)\geq -\mu_0 \int_{\R} |\phi|^2.$$
The integrals on the right-hand side of \eqref{1D1+} can be estimated using the Holder inequalities:
\[
\left|\int_{\R} w^{k}\phi\right|\leq \left(\int_{\R}w^{2k}\right)^{1/2}\left(\int_{\R}|\phi|^2\right)^{1/2},\;\;k=1, 2,\]
and
\[\left|\int_{\R} w^3\bar{\phi}\right|\leq \left(\int_{\R}w^2\right)^{1/2}\left(\int_{\R}|\phi|^2\right)^{1/2}.
\]
It follows that
$$ |\lambda| \leq \mu_0  + C \left(\Big|\frac{3}{1+\tilde{\tau} \lambda}\Big|+ \Big|\frac{2\tilde{\tau} \lambda}{1+\tilde{\tau} \lambda}\Big|\right),$$
which together with $\lambda_R\geq 0$ implies $|\lambda|\leq C$.

The claim is proved.

\end{proof}

\medskip

\noindent
{\bf Claim 2:} If $\tilde{\tau} \to +\infty$  then $ \lambda \to 0$.

\begin{proof}
Suppose the claim is false. We have, along a subsequence,  \[\tilde{\tau}\to \infty,\quad \lambda \to \lambda_\infty \not = 0.\] Then \[\frac{1}{1+\tilde{\tau} \lambda} \to 0,\] and we obtain the following limiting problem
\begin{equation}
\label{1D2}
\Delta \phi -\phi+ 3w^2 \phi -2 \frac{\int_{\R} w \phi}{ \int_{\R} w^2} w^3 =\lambda_\infty \phi.
\end{equation}

A contradiction can then be derived by following the same line of the proof of Case 1 of Theorem 1.4 in \cite{Wei1999}, with a few modifications.

Let the linear operator $L_1:H^1(\R)\to L^2(\R)$ be defined by
\begin{equation*}
L_1\phi:=L_0\phi-2\frac{\int_{\R} w\phi}{\int_{\R} w^2}w^3-2\frac{\int_{\R} w^3\phi}{\int_{\R} w^2}w+2\frac{\int_{\R} w^{4}\int_{\R} w\phi}{\left(\int_{\R} w^2\right)^2}w,\qquad \phi\in H^1(\R).
\end{equation*}
Then $L_1$ is self-adjoint.

According to Lemma 5.1 of \cite{Wei1999}, we have
\begin{itemize}
\item The kernel of $L_1$ is given by $X_1=\textnormal{span}\{w, w_y\}$.
    \item There exists a positive constant $a_1>0$ such that for all $\phi\in H^1(\R^2)$
   \begin{equation}
   \label{1DL1}
   \begin{aligned}
    L_1(\phi, \phi)=L_0(\phi, \phi)&
    +4\frac{\int_{\R} w\phi\int_{\R} w^3\phi}{\int_{\R} w^2}-2\frac{\int_{\R} w^{4}}{\left(\int_{\R} w^2\right)^2}\left(\int_{\R} w\phi\right)^2\\
    &\geq a_1d^2_{L^2(\R)}(\phi, X_1),
    \end{aligned}
    \end{equation}
    where
    \[
    L_0(\phi, \phi):=\int_{\R}(|\nabla \phi|^2+\phi^2-3w^2\phi^2),
    \]
    and
    $d_{L^2(\R)}(\phi, X_1)$ is the distance of $\phi$ to $X_1$ in the space of $L^2(\R)$.
\end{itemize}

Now we are ready to prove the claim. Let $\lambda_\infty=\lambda_R+i\lambda_I$ and $\phi=\phi_R+i\phi_I$. Then we have the system of equations
\begin{equation}\label{1DRP}
L_0\phi_R-2 \frac{\int_{\R} w \phi_R}{ \int_{\R} w^2} w^3 =\lambda_R \phi_R-\lambda_I\phi_I,
\end{equation}
\begin{equation}\label{1DIP}
L_0\phi_I-2 \frac{\int_{\R} w \phi_I}{ \int_{\R} w^2} w^3 =\lambda_R \phi_I+\lambda_I\phi_R.
\end{equation}
Multiplying \eqref{1DRP} by $\phi_R$, \eqref{1DIP} by $\phi_I$, integrating over $\R$, and summing up, we obtain
\begin{equation}
\begin{aligned}
L_0(\phi_R, \phi_R)&+L_0(\phi_I, \phi_I)+\lambda_R\int_{\R} (\phi_R^2+\phi_I^2)\\
&=-\frac{2}{\int_{\R} w^2}\left(\int_{\R} w^3\phi_R\int_{\R} w\phi_R+\int_{\R} w^3\phi_I\int_{\R} w\phi_I\right),
\end{aligned}
\end{equation}
or in the form
\begin{equation}\label{1DRI}
\begin{aligned}
&L_1(\phi_R, \phi_R)+L_1(\phi_I, \phi_I)+\lambda_R\int_{\R} (\phi_R^2+\phi_I^2)\\
&\qquad=2\frac{\int_{\R} w\phi_R\int_{\R} w^3\phi_R+\int_{\R} w\phi_I\int_{\R} w^3\phi_I}{\int_{\R} w^2}\\
&\qquad-\frac{2\int_{\R} w^{4}}{\left(\int_{\R} w^2\right)^2}\left[\left(\int_{\R} w\phi_R\right)^2+\left(\int_{\R} w\phi_I\right)^2\right].
\end{aligned}
\end{equation}
Multiplying \eqref{RP} and \eqref{IP} by $w$ respectively, and integrating over $\R$, we obtain
\begin{equation}\label{1DRP+}
2\int_{\R} w^3\phi_R-2 \frac{\int_{\R} w^4}{ \int_{\R} w^2}\int_{\R} w \phi_R =\lambda_R\int_{\R}  w\phi_R-\lambda_I\int_{\R} w\phi_I,
\end{equation}
\begin{equation}\label{1DIP+}
2\int_{\R} w^3\phi_I-2 \frac{\int_{\R} w^4}{ \int_{\R} w^2}\int_{\R} w \phi_I =\lambda_R\int_{\R}  w\phi_I+\lambda_I\int_{\R} w\phi_R.
\end{equation}
Multiplying \eqref{1DRP+} by $\int_{\R} w\phi_R$, \eqref{1DIP+} by $\int_{\R} w\phi_I$, and summing up, we obtain
\begin{equation}\label{1DRI+}
\begin{aligned}
&\int_{\R} w\phi_R\int_{\R} w^3\phi_R+\int_{\R} w\phi_I\int_{\R} w^3\phi_I\\
&=\left(\frac{\lambda_R}{2}+\frac{\int_{\R} w^4}{ \int_{\R} w^2}\right)\left[\left(\int_{\R} w\phi_R\right)^2+\left(\int_{\R} w\phi_I\right)^2\right].
\end{aligned}
\end{equation}
Plugging \eqref{1DRI+} into \eqref{1DRI} we obtain
\begin{equation}\label{1DRI++}
\begin{aligned}
&L_1(\phi_R, \phi_R)+L_1(\phi_I, \phi_I)+\lambda_R\int_{\R} (\phi_R^2+\phi_I^2)\\
&=\frac{\lambda_R}{\int_{\R} w^2}\left[\left(\int_{\R} w\phi_R\right)^2+\left(\int_{\R} w\phi_I\right)^2\right].
\end{aligned}
\end{equation}
Decompose
\[
\phi_R=c_Rw+d_Rw_y+\phi_R^{\perp},\quad\phi_R^{\perp}\perp X_1,
\]
\[
\phi_I=c_Iw+d_Iw_y+\phi_I^{\perp},\quad\phi_I^{\perp}\perp X_1,
\]
and then  put them into \eqref{1DRI++} and calculate
\[
\left(\int_{\R} w\phi_R\right)^2+\left(\int_{\R} w\phi_I\right)^2=(c_R^2+c_I^2)\left(\int_{\R}w^2\right)^2,
\]
\[
\int_{\R}(\phi_R^2+\phi_I^2)=(c_R^2+c_I^2)\int_{\R}w^2+(d_R^2+d_I^2)\int_{\R}w_y^2+\int_{\R}[(\phi_R^{\perp})^2+(\phi_I^{\perp})^2].
\]
We therefore deduce from \eqref{1DRI++}
\begin{equation}\label{1DRI+++}
\begin{aligned}
&L_1(\phi^{\perp}_R, \phi^{\perp}_R)+L_1(\phi^{\perp}_I, \phi^{\perp}_I)\\
&+\lambda_R(c_R^2+c_I^2)\int_{\R}w^2+\lambda_R(\|\phi_R^{\perp}\|^2_{L^2(\R)}+\|\phi_I^{\perp}\|^2_{L^2(\R)}) =0,
\end{aligned}
\end{equation}
and so by \eqref{1DL1}
\begin{equation*}
\begin{aligned}
&\lambda_R(c_R^2+c_I^2)\int_{\R} w^2+\lambda_R(d_R^2+d_I^2)\int_{\R} w_y^2\\
&+(\lambda_R+a_1)(\|\phi_R^{\perp}\|^2_{L^2(\R)}+\|\phi_I^{\perp}\|^2_{L^2(\R)})\leq 0.
\end{aligned}
\end{equation*}

If $\lambda_R>0$, then we have $\phi_R^{\perp}=\phi_I^{\perp}=0$ and $c_R=c_I=d_R=D_I=0$  so $\phi_R=\phi_I=0$. We have a contradiction.

If $\lambda_R=0$, then  we have $\phi_R^{\perp}=\phi_I^{\perp}=0$. Putting $\phi_R$ and $\phi_I$ into equations \eqref{1DRP} and \eqref{1DIP} and using the identities
\[
L_0w_y=0\qquad\text{and}\;\;\;L_0w=2w^3
\]
we get
\[
\begin{aligned}
&\lambda_I(c_I w+d_I w_y)=0,\\
&\lambda_I(c_R w+d_R w_y)=0.
\end{aligned}
\]
If $\lambda_I\neq 0$ we then have $c_R=c_I=d_R=d_I=0$ and hence $\phi_R=\phi_I=0$, a contraction.

Therefore $\lambda_\infty=0$. The claim is proved.
\end{proof}

Next we discuss possible limits of $\tilde{\tau} \lambda$.

\noindent
{\bf Claim 3:} $ |\tilde{\tau} \lambda | \to +\infty$ as $\tilde{\tau} \to +\infty$.

\begin{proof}
Suppose for contradiction the claim is false. Then we may assume that along a subsequence  $\tilde{\tau} \lambda \to \mu_\infty \in {\mathbb C}$. Then by Claim 1 we arrive at the following equation
\begin{equation}
\label{1D3}
\Delta \phi -\phi+ 3w^2 \phi -\frac{3}{1+\mu_\infty } \frac{\int_{\R} w^2 \phi}{\int_{\R} w^3} w^3-\frac{2\mu_\infty}{1+\mu_\infty} \frac{\int_{\R} w \phi}{ \int_{\R} w^2} w^3 =0.
\end{equation}

Hence
$$ \phi=\frac{3}{1+\mu_\infty } \frac{\int_{\R} w^2 \phi}{\int_{\R} w^3} L_0^{-1}[ w^3]+\frac{2\mu_\infty}{1+\mu_\infty} \frac{\int_{\R} w \phi}{ \int_{\R} w^2} L_0^{-1}[w^3], $$
and so
\begin{equation}\label{phi2}
2 \phi= \left[\frac{3}{1+\mu_\infty } \frac{\int_{\R} w^2 \phi}{\int_{\R} w^3} +\frac{2\mu_\infty}{1+\mu_\infty} \frac{\int_{\R} w \phi}{ \int_{\R} w^2} \right] w.
\end{equation}
Let $A=\int_{\R} w \phi, B= \int_{\R} w^2 \phi$. Multiplying \eqref{phi2} by $w$ and integrating over $\R$ we obtain
$$ 2A = \frac{3}{1+\mu_\infty} \frac{\int_{\R} w^2}{\int_{\R} w^3} B + \frac{2 \mu_\infty}{1+\mu_\infty} A, $$
Multiplying \eqref{phi2} by $w^2$ and integrating over $\R$ we obtain
$$ 2B= \frac{3}{1+\mu_\infty} B + \frac{2\mu_\infty}{1+\mu_\infty} \frac{\int_{\R} w^3}{\int_{\R} w^2} A.$$
It follows that
$$ 2B=3B,$$
which is impossible.
Hence $A=B=0$ and $\phi=0$. The contradiction finishes the proof of the claim.
\end{proof}

Therefore $\tilde{\tau} \lambda \to +\infty, \lambda \to 0$. We see that $\phi \to \phi_0$  in $H^1(\R)$ which satisfies
\begin{equation}
\label{1D4}
\Delta \phi_0 -\phi_0+ 3w^2 \phi_0 -2 \frac{\int_{\R} w \phi_0}{ \int_{\R} w^2} w^3 =0
\end{equation}
and hence $\phi_0= C w$. (We may assume that $C=1$).

Let us decompose
$$ \phi = w + \phi^\perp$$
with
\begin{equation}
\label{1D41}
 \int_{\R} w \phi^\perp= 0.
 \end{equation}
In this way,  $\phi^\perp \to 0 $ in $L^2(\R)$ as $\tilde{\tau} \to +\infty$.

We then obtain from \eqref{1D1}
\begin{equation}
\label{1D5}
\Delta \phi^\perp -\phi^\perp+ 3w^2 \phi^\perp -\frac{3}{1+\tilde{\tau} \lambda} \frac{\int_{\R} w^2 \phi^\perp}{\int_{\R} w^3} w^3=\lambda \phi^\perp + \lambda w + \frac{1}{1+\tilde{\tau} \lambda}  w^3.
\end{equation}

\noindent
{\bf Claim 4:} $ \frac{2}{1+\tilde{\tau} \lambda} +\lambda=o(\lambda)$.

\begin{proof}

Multiplying (\ref{5}) by $w_0=L_0^{-1} [w]=\frac{1}{2}w+\frac{1}{2} y w_y $ and using (\ref{1D41}) we get
\begin{equation}
\label{1D7}
 \frac{1}{1+\tilde{\tau} \lambda}\left(1+3\frac{\int_{\R} w^2 \phi^\perp}{\int_{\R} w^3} \right) \int_{\R}  w^3 w_0+ \lambda \left(\int_{\R} w w_0 +\int_{\R} \phi^\perp w_0\right)=0.
 \end{equation}
On the one hand since $ \| \phi^\perp\|_{L^2(\R)}= o(1)$, we have \[3\frac{\int_{\R} w^2 \phi^\perp}{\int_{\R} w^3}=o(1),\qquad\int_{\R} \phi^\perp w_0=o(1).\]
On the other hand, since
\[
\int_{\R}w^4=\frac{4}{3}\int_{\R}w^2,\qquad \int_{\R}w^2_y=\frac{1}{3}\int_{\R}w^2.
\]
we have
\begin{equation}\label{1Dw3w0}
 \begin{aligned}
 \int_{\R} w^3 w_0&= \frac{1}{2} \left(\int_{\R} w^4+\int_{\R}yw_yw^3\right)\\
&=\frac{1}{2} \left(\int_{\R} w^4-\frac{1}{4}\int_{\R}w^4\right)\\
&=\frac{1}{2}\int_{\R}w^2,
\end{aligned}
\end{equation}
and
\begin{equation}\label{1Dww0}
\int_{\R} w w_0= \frac{1}{2} \left(\int_{\R} w^2+\int_{\R}yw_yw\right)=\frac{1}{4} \int_{\R} w^2.
\end{equation}
Therefore from (\ref{1D7}) we have
\[ \frac{2}{1+\tilde{\tau} \lambda}  + \lambda =o(\lambda). \]
As a result,
\begin{equation}\label{V-lambda1}
\lambda=\pm\sqrt{2} \tilde{\tau}^{-1/2} i +  O(\tilde{\tau}^{-1}).
\end{equation}
\end{proof}

The estimate \eqref{V-lambda1} is not sufficient to determine the sign of $Re(\lambda)$. We proceed to find the next order of $Re(\lambda)$.

\medskip
\noindent
{\bf Claim 5:} $ \phi^\perp = \lambda (1+ o(1)) \phi_0 $ where $\phi_0$ satisfies
$$ L_0 \phi_0= w -\frac{1}{2} w^3. $$
and hence
$$ \phi_0= w_0-\frac{1}{4} w.$$

\begin{proof}
Since $ \frac{2}{1+\tilde{\tau} \lambda} \sim -\lambda$, we set $ \phi^\perp=\lambda \phi_1$, plug into \eqref{1D5}, and get
$$ L_0 [\phi_1] =\frac{3}{1+\tilde{\tau}\lambda}\frac{\int_{\R}w^2\phi_1}{\int_{\R}w^3}w^3+\lambda\phi_1+ w -\frac{1}{2}w^3+ o(1). $$
Since
\[
\frac{3}{1+\tilde{\tau}\lambda}\frac{\int_{\R}w^2\phi_1}{\int_{\R}w^3}w^3=o(1), \quad \lambda\phi_1=o(1),
\]
and $ L_0^{-1}$ exists we obtain that
\[
\phi_1=(1+o(1))\phi_0,
\]
where $\phi_0$ is defined by
\[
L_0\phi_0=w-\frac{1}{2}w^3.
\]
So
\[
\phi_0=L^{-1}_0[w]-\frac{1}{2}L_0^{-1}[w^3]=w_0-\frac{1}{4}w.
\]
The claim is proved.

We note that $$ \int_{\R} w \phi_0=\int_{\R}ww_0-\frac{1}{4}\int_{\R}w^2=0.$$
\end{proof}

Finally we derive the equation for $\lambda$: from (\ref{1D7}) we get
\begin{equation}
\label{1D8}
\begin{aligned}
  &\frac{1}{1+\tilde{\tau} \lambda} \int_{\R}  w^3 w_0+ \lambda \int_{\R} w w_0\\
  & = -\lambda^2 \int_{\R} \phi_0 w_0 -\frac{3\lambda }{1+\tilde{\tau} \lambda} \frac{\int_{\R} w^2 \phi_0}{\int_{\R} w^3} \int_{\R} w^3 w_0 + o(\lambda^2).
  \end{aligned}
  \end{equation}
Note that from \eqref{1Dw3w0} and \eqref{1Dww0} we have
$$ \int_{\R}  w^3 w_0=\frac{1}{2} \int_{\R} w^2,\qquad \int_{\R} w w_0 =\frac{1}{4} \int_{\R} w^2.$$
On the other hand we have
$$ \int_{\R} \phi_0 w_0=\int_{\R} (w_0)^2 - \frac{1}{16} \int_{\R} w^2,$$
and
$$\begin{aligned}&\int_{\R} w^2 \phi_0= \int_{\R} w^2 (w_0-\frac{1}{4} w)\\
&=\frac{1}{4}\int_{\R}w^3+\frac{1}{2}\int_{\R}yw_yw^2\\
&= \frac{1}{12} \int_{\R} w^3.
\end{aligned} $$
Substituting the above into (\ref{1D8}), we get
$$ \lambda= \sqrt{2} \tilde{\tau}^{-1/2} i +  \tilde{\tau}^{-1} b, $$
where
$$ \lambda^2 \sim -2 \tilde{\tau}^{-1}, \qquad\frac{3\lambda }{1+\tilde{\tau} \lambda} \sim 3 \tilde{\tau}^{-1}.$$
Therefore we have the equation for $b$:
$$ \frac{ b+1}{2}\times\frac{1}{2} \int_{\R} w^2 + b\times \frac{1}{4} \int_{\R} w^2 = 2 \int_{\R} \phi_0 w_0-3\times\frac{1}{12} \times\frac{1}{2} \int_{\R} w^2 +o(1),$$
From here we obtain
$$ b=4\frac{\int_{\R}\phi_0 w_0}{\int_{\R}w^2}-\frac{3}{4}+o(1). $$
Using the expression $w_0=\frac{1}{2}(w+yw_y)$ we have
\begin{equation}
\begin{aligned}
b&=\frac{\int_{\R} \phi_0w_0}{\int_{\R}w^2}-\frac{3}{4}+o(1)=\int_{\R} (w_0)^2-1+o(1)\\
&=\frac{1}{\int_{\R}w^2}\left(\int_{\R} y^2(w_y)^2+2\int_{\R}yw_yw+\int_{\R}w^2\right)-1+o(1)\\
&=\frac{\int_{\R}y^2(w_y)^2}{\int_{\R}w^2}-1+o(1),
\end{aligned}
\end{equation}
To further simplify the form of $b$ we note that in the one dimensional case
\[
w(y)=\sqrt{2}\sech y.
\]
It is clear that
\[
\int_{\R}w^2=4.
\]
We can also compute (see the Appendix) that
\[\int_{\R}y^2(w_y)^2=\frac{8}{3}+\frac{\pi^2}{9}.\]
Therefore
\[
b=\frac{\pi^2-12}{36}+o(1)<0.
\]

The proof of Theorem \ref{thm6.1} is now complete.

\begin{rem}
The argument in this section does not restrict to even eigenfunctions.
\end{rem}

\bigskip

\section{Appendix: The computation of $\int_{\R} (y^2 w_y^2) dy$}

Since \[
w(y)=\sqrt{2}\sech y,
\]
we have
\[w_y=2\sqrt{2}\times\frac{e^{-y}-e^y}{\left(e^{y}+e^{-y}\right)^2},\]
and so
\[
\int_{-\infty}^{\infty} y^2(w_y)^2=16\int_0^\infty y^2\frac{(e^{-y}-e^y)^2}{\left(e^{y}+e^{-y}\right)^4}dy.
\]
We compute
\[\begin{aligned}
&\int \frac{(e^{-y}-e^y)^2}{\left(e^{y}+e^{-y}\right)^4}dy=-\frac{1}{3}\int (e^y-e^{-y})d(e^y+e^{-y})^{-3}\\
&=-\frac{1}{3}(e^y-e^{-y})(e^y+e^{-y})^{-3}+\frac{1}{3}\int(e^y+e^{-y})^{-2}dy\\
&=-\frac{1}{3} (e^y-e^{-y})(e^y+e^{-y})^{-3}-\frac{1}{6}e^{-y}(e^y+e^{-y})^{-1}+C.
\end{aligned}\]
Therefore
\[\begin{aligned}
&\int_0^\infty y^2\frac{(e^{-y}-e^y)^2}{\left(e^{y}+e^{-y}\right)^4}dy=\frac{2}{3}\int_0^{\infty} y (e^y-e^{-y})(e^y+e^{-y})^{-3}dy+\frac{1}{3}\int_0^{\infty}ye^{-y}(e^y+e^{-y})^{-1}dy.
\end{aligned}\]
We have
\[
\int (e^y-e^{-y})(e^y+e^{-y})^{-3}dy=-\frac{1}{2}(e^y+e^{-y})^{-2}+C,
\]
and
\[
\int e^{-y}(e^y+e^{-y})^{-1}dy=-\frac{1}{2}\log\left(1+e^{-2y}\right)+C.
\]
So
\[
\int_0^{\infty} y (e^y-e^{-y})(e^y+e^{-y})^{-3}dy=\frac{1}{2}\int_0^\infty (e^y+e^{-y})^{-2}dy=\frac{1}{4}.
\]
While
\[\begin{aligned}
&\int_0^{\infty}ye^{-y}(e^y+e^{-y})^{-1}dy=\frac{1}{2}\int_0^{\infty}\log\left(1+e^{-2y}\right)dy\\
&=\frac{1}{4}\int_0^{\infty}\log\left(1+e^{-y}\right)dy=\frac{1}{4}\int_0^1\frac{\log(1+t)}{t}dt,
\end{aligned}\]
where
\[
\begin{aligned}
&\int_0^1\frac{\log(1+t)}{t}=\int_0^1\left(1-\frac{t}{2}+\frac{t^2}{3}-\cdots\right)dt\\
&=1-\frac{1}{2^2}+\frac{1}{3^2}-\frac{1}{4^2}+\cdots\\
&=(1+\frac{1}{2^2}+\frac{1}{3^2}+\frac{1}{4^2}+\cdots)-\frac{2}{2^2}(1+\frac{1}{2^2}+\frac{1}{3^2}+\frac{1}{4^2}+\cdots)\\
&=\frac{\pi^2}{12}.
\end{aligned}\]
Therefore finally we obtain
\begin{equation}
\int_{\R}y^2(w_y)^2=\frac{8}{3}+\frac{\pi^2}{9}.
\end{equation}

\end{document}